\newtheorem{thm}{Theorem}[section]
\newtheorem{lem}[thm]{Lemma}
\theoremstyle{definition}
\renewcommand{\bar}{\overline}
\renewcommand{\tilde}{\widetilde}
\newtheorem{defn}{Definition}[section]
\newcommand{\scr}[1]{\mathscr #1}
\definecolor{wco}{rgb}{0.5,0.2,0.3}
\numberwithin{equation}{section} \theoremstyle{remark}
\newtheorem{rem}{Remark}[section]
\newcommand{\ua}{\uparrow}
\title{{\bf Numerical Approximation of Stationary Distribution for SPDEs}
}
\author{
{\bf  Jianhai Bao   and   Chenggui Yuan}\\
 \footnotesize{Department of Mathematics,
Swansea University, Singleton Park, SA2 8PP, UK}\\
\footnotesize{ C.Yuan@swansea.ac.uk}}
\begin{document}
\def\R{\mathbb R}  \def\ff{\frac} \def\ss{\sqrt} \def\B{\mathbf
B}
\def\N{\mathbb N} \def\kk{\kappa} \def\m{{\bf m}}
\def\dd{\delta} \def\DD{\Dd} \def\vv{\varepsilon} \def\rr{\rho}
\def\<{\langle} \def\>{\rangle} \def\GG{\Gamma} \def\gg{\gamma}
  \def\nn{\nabla} \def\pp{\partial} \def\EE{\scr E}
\def\d{\text{\rm{d}}} \def\bb{\beta} \def\aa{\alpha} \def\D{\scr D}
  \def\si{\sigma} \def\ess{\text{\rm{ess}}}
\def\beg{\begin} \def\beq{\begin{equation}}  \def\F{\scr F}
\def\Ric{\text{\rm{Ric}}} \def\Hess{\text{\rm{Hess}}}
\def\e{\text{\rm{e}}} \def\ua{\underline a} \def\OO{\Omega}  \def\oo{\omega}
 \def\tt{\tilde} \def\Ric{\text{\rm{Ric}}}
\def\cut{\text{\rm{cut}}} \def\P{\mathbb P} \def\ifn{I_n(f^{\bigotimes n})}
\def\C{\scr C}      \def\aaa{\mathbf{r}}     \def\r{r}
\def\gap{\text{\rm{gap}}} \def\prr{\pi_{{\bf m},\varrho}}  \def\r{\mathbf r}
\def\Z{\mathbb Z} \def\vrr{\varrho} \def\l{\lambda}
\def\L{\scr L}\def\Tt{\tt} \def\TT{\tt}\def\II{\mathbb I}
\def\i{{\rm in}}\def\Sect{{\rm Sect}}\def\E{\mathbb E} \def\H{\mathbb H}
\def\M{\scr M}\def\Q{\mathbb Q} \def\texto{\text{o}} \def\LL{\Lambda}
\def\Rank{{\rm Rank}} \def\B{\scr B} \def\i{{\rm i}} \def\HR{\hat{\R}^d}
\def\to{\rightarrow}\def\l{\ell}\def\ll{\lambda}
\def\8{\infty}\def\ee{\epsilon} \def\Y{\mathbb{Y}} \def\lf{\lfloor}
\def\rf{\rfloor}\def\3{\triangle}

\maketitle

\begin{abstract}
 In this paper, we show that the exponential integrator scheme  both in spatial discretization and time
 discretization for a class of  stochastic partial differential
 equations has a unique stationary distribution
   whenever the stepsize is sufficiently small, and
 reveal that the weak limit of the law for the
exponential integrator scheme is in fact the counterpart for  the
stochastic partial differential
 equation considered.
\\

\noindent
 {\bf AMS subject Classification:}\    60H15, 65C30, 35K90  \\
\noindent {\bf Keywords:} stochastic partial differential
 equation, mild solution, stationary
distribution, exponential integrator scheme, numerical
approximation.
 \end{abstract}

\section{Introduction}
The convergence and the stability of numerical schemes for
finite-dimensional stochastic differential equations (SDEs) have
been extensively investigated, see, e.g., Kloeden and Platen
\cite{KP92} and Schurz \cite{S97}. Nowadays, numerical approximate
schemes for stochastic partial differential equations (SPDEs) are
also becoming more and more popular. There is extensive literature
on strong/weak convergence of approximate solutions for SPDEs. For
instance,  under a dissipative condition, Caraballo and Kloeden
\cite{ck06} showed the pathwise convergence of finite-dimensional
approximations for a class of reaction-diffusion equations. Applying
the Malliavin calculus approach, Debussche \cite{d08} discussed the
error of the Euler scheme applied to an SPDE. Greksch and Kloeden
\cite{gk96} investigated the approximation of parabolic SPDEs
through eigenfunction argument. Gy\"ongy \cite{g98}, Shardlow
\cite{s99}, and Yoo \cite{y00} applied finite differences to
approximate the mild solutions of parabolic SPDEs driven by
space-time white noise. Hausenblas \cite{h02,h03} utilized spatial
discretization  and time discretization, including implicit Euler,
explicit Euler scheme and Crank-Nicholson scheme, to approximate
quasi-linear evolution equations. Higher order pathwise numerical
approximations of SPDEs with additive noise was considered in
\cite{J11}. For the Taylor approximations of SPDEs, we refer to the
monograph \cite{JK11}.

  However, there are few results on the
 asymptotic behavior of numerical solutions for infinite-dimensional SPDEs although the counterpart for the finite-dimensional case
 has been extensively studied, see, e.g., Schurz \cite{S97}. In our present work, we shall investigate the asymptotic behavior
 of certain numerical scheme for a class of SPDEs. To begin with,
we introduce some notation and thus give the framework of our work.
Let $(H, \<\cdot,\cdot\>_H, \|\cdot\|_H)$ be a real separable
Hilbert space. Let $\mbox{id}_H: H\rightarrow H$ be the identity
operator, and denote $(\L(H),\|\cdot\|)$ and
$(\L_{HS}(H),\|\cdot\|_{HS})$ by the family of bounded linear
operators and Hilbert-Schmidt operators from $H$ into $H$,
respectively.
  In this paper, we consider an SPDE on the real separable Hilbert space
  $(H,\<\cdot,\cdot\>_H,\|\cdot\|_H)$ in the form
\begin{equation}\label{eq1}
\d X(t)=\{AX(t)+b(X(t))\}\d t+\si(X(t))\d W(t)
\end{equation}
with initial value $X(0)=x\in H$, where $W(t)$ is an $H$-valued
cylindrical $\mbox{id}_H-$Wiener process  defined on some
probability space $(\OO, \F, \P)$ with a filtration $\{\F_t\}_{t\ge0}$ satisfying the usual conditions,  $b:H\to H$ is a
Lipschitz continuous mapping, $\si(x):=\si^0+\si^1(x),x\in H$, such
that $\si^0\in\L(H)$ and $\si^1:H\to\L_{HS}(H)$.

Throughout the paper we impose the following assumptions:
\begin{enumerate}
\item[\textmd{({H1})}] $(A,\mathcal {D}(A))$  is a self-adjoint operator on $H$ generating
an immediately
 compact
 $C_0$-semigroup $\{e^{tA}\}_{t\geq0}$  such that $\|e^{tA}\|\leq e^{-\alpha
 t}$ for some $\aa>0$. In this case, by \cite[Theorem 6.26, p.185]{KT66} and \cite[Theorem 6.29, p.187]{KT66},  $-A$ has discrete spectrum
 $\{\ll_i\}_{i\ge1}$ such that
 $0<\ll_1\le\ll_2\le\cdots\le \ll_i\le\cdots$ and  $\lim_{i\to\8}\ll_i=\8$ with corresponding eigenbasis
 $\{e_i\}_{i\geq1}$ of $H$.
\item[\textmd{({H2})}] There exist $\theta_1\in(0,1)$ and $\dd_1\in(0,\8)$ such that $\int_0^t\|(-A)^{\theta_1}e^{sA}\si^0\|_{HS}^2\d
s\le\dd_1$ for any $t>0$, where
$(-A)^{\theta_1}:=\sum_{k\ge1}\ll_k^{\theta_1}(e_k\otimes e_k)$
denotes the fractional power of the operator $-A$.
\item[\textmd{({H3})}] There exist $L_1,L_2>0$
such that
\begin{equation*}
\|b(x)-b(y)\|_H\le L_1\| x-y\|_H \mbox{ and } \|
\si^1(x)-\si^1(y)\|_{HS}\le L_2\| x-y\|_H,\ \ \ x,y\in H.
\end{equation*}
\item[\textmd{({H4})}]
There exists  $\gamma\in\R$ such that
\begin{equation*}
\begin{split}
2\< x-y, b(x)-b(y)\>_H+\| \si^1(x)-\si^1(y)\|_{HS}^2\le-\gg\|
x-y\|_H^2,\ \ \ x,y\in H.
\end{split}
\end{equation*}
\end{enumerate}

By \cite[Theorem 5.3.1, p.66]{DZ96},  we know that (H1)-(H3) imply
the existence and the uniqueness of the mild solution to
\eqref{eq1}, i.e.,  there exists a unique $H$-valued adapted process
$X_x(t)$ with the initial value $x\in H$ such that
\begin{equation}\label{eq35}
X_x(t)=\e^{tA}x+\int_0^t\e^{(t-s)A}b(X_x(s))\d
s+\int_0^t\e^{(t-s)A}\si(X_x(s))\d W(s).
\end{equation}

\begin{rem}\label{Laplace}
{\rm In fact, under (H1), (H3) and $\int_0^t\|
\e^{sA}\si^0\|_{HS}^2\d s\le\dd_2$ for any $t>0$ and some $\dd_2>0$,
\eqref{eq1} also admits a unique mild solution on $H$. While (H2) is
just imposed for the later numerical analysis. Let
$\si^0=\mbox{id}_H$,  and $Ax:=\pp^2_\xi x$ for $x\in\mathcal
{D}(A):=H^2(0,\pi)\cap H^1_0(0,\pi)$. Then $A$ is a self-adjoint
negative operator and $Ae_k=-k^2e_k,\ k\in\mathbb{N}$, where
$e_k(\xi):=(2/\pi)^{1/2}\sin k\xi, \ \xi\in[0,\pi], \
k\in\mathbb{N}$. A simple computation shows that
\begin{equation*}
\int_0^t\|(-A)^{\theta_1}\e^{sA}\|_{HS}^2\d
s=\sum_{k=1}^\8(k^2)^{2\theta_1}\int_0^t\e^{-2k^2s}\d
s\le\ff{1}{2}\sum_{k=1}^\8(k^2)^{2\theta_1-1}.
\end{equation*}
Then  (H2) holds with
$\dd_1=\ff{1}{2}\sum_{k=1}^\8(k^2)^{2\theta_1-1}$ for
$\theta_1\in(0,1/4)$. }
\end{rem}

\begin{rem}
{\rm By (H3), it is readily to see that
\begin{equation}\label{eq16}
\|b(x)\|_H^2+\|\si^1(x)\|_{HS}^2\le\bar{L}(1+\|x\|_H^2),\ \ \ x\in
H,
\end{equation}
where $\bar{L}:=2((L_1^2+L_2^2)\vee\mu)$ with
$\mu:=\|b(0)\|_H^2+\|\si^1(0)\|_{HS}^2$. Moreover, by (H4) one has
\begin{equation}\label{eq29}
\begin{split}
2\<x,b(x)\>_H+\|\si^1(x)\|_{HS}^2&=2\<x,b(x)-b(0)\>_H+\|\si^1(x)-\si^1(0)\|_{HS}^2\\
&\quad+2\<x,b(0)\>_H+2\<\si^1(x)-\si^1(0),\si^1(0)\>_{HS}+\|\si^1(0)\|_{HS}^2\\
&\le-(\gg-\ee)\|x\|_H^2+2(L_2^2+1+\ee)\mu\ee^{-1},\ \ \
\ee\in(0,1),\ \ x\in H,
\end{split}
\end{equation}
where $\<T,S\>_{HS}:=\sum_{i=1}^\8\<Te_i,Se_i\>_H$ for $S,T\in
\L_{HS}(H)$.}
\end{rem}

Before establishing the numerical scheme, we further need to
introduce some notation. For any $n\in\mathbb{N}$, let $\pi_n:H\to
H_n:=\mbox{span}\{e_1,\cdots,e_n\}$ be the orthogonal projection,
i.e., $\pi_nx=\sum_{i=1}^n\langle x,e_i\rangle_He_i, x\in H$,
$A_n:=\pi_nA\in\L(H_n),b_n:=\pi_nb:H_n\to H_n$ and
$\sigma_n:=\pi_n\sigma:H_n\to\L_{HS}(H_n)$. Moreover, throughout the
paper, let $x_n:=\pi_n x$ for arbitrary $x\in U$, where $U$ is a
bounded subset of $H.$

Consider finite-dimensional  approximation associated with
\eqref{eq1} on $H_n\simeq\R^n$
\begin{equation}\label{f1}
\begin{cases}
\d X^n(t)=\{A_nX^n(t)+b_n(X^n(t))\}\d t+\sigma_n(X^n(t))\d W(t),\\
X^n(0)=x_n.
\end{cases}
\end{equation}
The spatial approximation \eqref{f1} is also called the Galerkin
approximation of \eqref{eq1}. Due to
$$\pi_nAx=\pi_nA\Big(\sum_{i=1}^n\langle
x,e_i\rangle_He_i\Big)=-\sum_{i=1}^n\langle
x,e_i\rangle_H\lambda_ie_i,\ \ \ x\in H_n,$$ it follows that
\begin{equation}\label{eqf1}
A_nx=Ax,\ \ e^{tA_n}x=e^{tA}x \ \mbox{ and }\ \langle
x,b_n(y)\rangle_H=\langle x,b(y)\rangle_H
\end{equation}
for all $x,y\in H_n.$ By (H3) and the property of the projection
operator $\pi_n$, we have
\begin{equation*}
\begin{split}
&\|A_n(x-y)+b_n(x)-b_n(y)\|_H^2+\|\si_n^1(x)-\si_n^1(y)\|_{HS}^2\\
&\le2\|A_n(x-y)\|_H^2+2\|b_n(x)-b_n(y)\|_H^2+\|\si_n^1(x)-\si_n^1(y)\|_{HS}^2\\
&\le2\ll^2_n\|x-y\|_H^2+2\|b(x)-b(y)\|_H^2+\|\si^1(x)-\si^1(y)\|_{HS}^2\\
&\le2(\ll^2_n+L_1^2+L_2^2)\|x-y\|_H^2,\ \ \ \ x,y\in H_n.
\end{split}
\end{equation*}
Hence, under (H1) and (H3), \eqref{f1} admits a unique strong
solution $\{X^n_{x_n}(t)\}_{t\ge0}$ with the starting point  $x_n\in
H_n$.

Next we introduce a time-discretization scheme for   \eqref{f1}. For
a stepsize $\3\in(0,1)$ and each integer $k\ge0$, compute the
discrete {\it Exponential Integrator} (EI) scheme
$\bar{Y}^{n,\3}_{x_n}(k\3)\approx X^n_{x_n}(k\3)$ by setting
$\bar{Y}^{n,\3}_{x_n}(0):=x_n$ and forming
\begin{equation}\label{eq7}
\bar{Y}^{n,\3}_{x_n}((k+1)\3):=\e^{\3A_n}\{\bar{Y}^{n,\3}_{x_n}(k\3)+b_n(\bar{Y}^{n,\3}_{x_n})\3+\si_n(\bar{Y}^{n,\3}_{x_n}(k\3))\3W_k\},
\end{equation}
where $\3W_k:=W((k+1)\3)-W(k\3)$,  and define the continuous EI
scheme associated with \eqref{f1} by
\begin{equation}\label{eq6}
\begin{split}
Y^{n,\3}_{x_n}(t):&=\e^{tA_n}x_n+\int_0^t\e^{(t-\lf
s\rf)A_n}b_n(Y^{n,\3}_{x_n}(\lf
s\rf))\d s\\
&\quad+\int_0^t\e^{(t-\lf s\rf)A_n}\si_n(Y^{n,\3}_{x_n}(\lf s\rf))\d W(s)\\
&=\e^{tA}x_n+\int_0^t\e^{(t-\lf s\rf)A}b_n(Y^{n,\3}_{x_n}(\lf
s\rf))\d s\\
&\quad+\int_0^t\e^{(t-\lf s\rf)A}\si_n(Y^{n,\3}_{x_n}(\lf s\rf))\d W(s)\\
\end{split}
\end{equation}
due to \eqref{eqf1}, where $\lf t\rf:=[t/\3]\3$ with  $[t/\3]$
standing for the integer part of $t/\3$.   It is easy to see from
\eqref{eq6} that
\begin{equation}\label{eq8}
\begin{split}
Y^{n,\3}_{x_n}(t)&=\e^{(t-s)A}Y^{n,\3}_{x_n}(s)+\int_s^t\e^{(t-\lf
r\rf)A}b_n(Y^{n,\3}_{x_n}(\lf r\rf))\d r\\
&\quad+\int_s^t\e^{(t-\lf r\rf)A}\si_n(Y^{n,\3}_{x_n}(\lf r\rf))\d
W(r),\ \ \ 0\le s\le t.
\end{split}
\end{equation}
By $Y^{n,\3}_{x_n}(0)=\bar{Y}^{n,\3}_{x_n}(0)$,  we deduce from
\eqref{eq7} and \eqref{eq8} that
$Y^{n,\3}_{x_n}(k\3)=\bar{Y}^{n,\3}_{x_n}(k\3)$, i.e.,
$Y^{n,\3}_{x_n}(t)$ coincides with the discrete EI approximate
solution at the gridpoints.


\begin{rem}
{\rm For the finite-dimensional SDEs, the discrete Euler-Maruyama
(EM) scheme and the continuous EM scheme are standard, e.g., \cite[
p.113]{my06}. While the roots of constructing the schemes
\eqref{eq6} and \eqref{eq8} go back to,  e.g., \cite{djr11, KLNS}. }
\end{rem}

For the  discrete EI scheme \eqref{eq7}, in this paper we are
concerned with the following two questions:

\begin{itemize}
\item  Given $n\in\mathbb{N}$, for what choices of the stepsize $\3\in(0,1) $ does the EI  scheme  have a unique
stationary distribution;
\item Will the stationary distribution of the EI scheme converge weakly to  some
probability measure?  If so, what's the weak limit probability
measure?
\end{itemize}
In what follows, we shall give the positive answers to these two
questions one-by-one.

It is also worth pointing out that, for the finite-dimensional case,
Yuan and Mao \cite{ym04} studied the invariant measure of EM
numerical solutions for a class of SDEs, and Yevik and Zhao
\cite{YZ11} discussed by the global attractor approach the existence
of stationary distribution of EM scheme for SDEs which generate
random dynamical systems. Comparing the EI scheme \eqref{eq7}  with
the EM scheme  for the finite-dimensional case,  e.g., \cite[
p.113]{my06}, we note that the explicit EI schemes \eqref{eq7} is
based not only on the spatial discretization but also on the time
discretization. Moreover, in \eqref{eq1},  the linear operator $A$
is generally unbounded, and the diffusion coefficient is not
Hilbert-Schmidt, which leads to be unavailable of the It\^o formula.
Therefore, our approaches are different from those of
\cite{YZ11,ym04}. What's more, Br\'{e}hier \cite{B} investigated the
existence of invariant measure for semi-implicit Euler scheme (in
time), and discussed the numerical approximation of the invariant
measure for a class of parabolic SPDEs driven by additive noise,
where the drift coefficient is assumed to be bounded.

The organization of this paper goes as follows: In Section 2, for a
give $n\in\mathbb{N}$ and a sufficiently small stepsize
$\3\in(0,1)$, we show that the  EI approximate solution
$\{\bar{Y}^{n,\3}_{x_n}(k\3)\}_{k\ge0,x_n\in H_n}$ admits a unique
stationary distribution
  under the properties
$(\mathbb{P}1)$ and $(\mathbb{P}2)$;  Section 3 is devoting to
providing some sufficient conditions such that $(\mathbb{P}1)$ and
$(\mathbb{P}2)$ hold; In the last section, we reveal that the weak
limit of the law for the EI approximate solution
$\{\bar{Y}^{n,\3}_{x_n}(k\3)\}_{k\ge0,x_n\in H_n}$ is in fact the
counterpart for \eqref{eq1}.

\section{Stationary Distribution for the EI Scheme}
For fixed integer $n\in\mathbb{N}$,  arbitrary integer $k\ge0$ and
$\Gamma\in\mathscr{B}(H_n)$, define the $k$-step transition
probability kernel for the discrete EI approximate solution
$\bar{Y}^{n,\3}_{x_n}(k\3)$ by
\begin{equation*}
\P^{n,\3}_k(x_n,\GG):=\P(\bar{Y}^{n,\3}_{x_n}(k\3)\in\GG).
\end{equation*}

Following the argument of that of \cite[Theorem 1.2]{ym04}, we
deduce that
\begin{lem}\label{Markov Property}
{\rm $\{\bar{Y}^{n,\3}_{x_n}(k\3)\}_{k\ge0}$  is a homogeneous
Markov process. }
\end{lem}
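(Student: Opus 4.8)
The plan is to recognize \eqref{eq7} as a one-step stochastic recursion driven by independent, identically distributed noise increments, and then to read off the Markov property and its time-homogeneity from the standard independence (``freezing'') lemma for conditional expectations, exactly as in \cite[Theorem 1.2]{ym04}. Concretely, I would introduce the map $F:H_n\times H\to H_n$,
\[
F(y,w):=\e^{\3A_n}\{y+b_n(y)\3+\si_n(y)w\},
\]
so that \eqref{eq7} becomes $\bar{Y}^{n,\3}_{x_n}((k+1)\3)=F(\bar{Y}^{n,\3}_{x_n}(k\3),\3W_k)$, where $\si_n(y)\3W_k$ is understood as $\int_{k\3}^{(k+1)\3}\si_n(y)\,\d W(s)$ with the argument frozen, a genuine $H_n$-valued centred Gaussian since $\si_n(y)$ is Hilbert--Schmidt. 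One first checks that $F$ is jointly Borel measurable: $\e^{\3A_n}\in\L(H_n)$ is bounded, $b_n$ is Lipschitz by (H3), and the frozen increment depends measurably on $(y,w)$.

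Next I would record the two probabilistic inputs. Since $W$ is an $\mbox{id}_H$-cylindrical Wiener process adapted to $\{\F_t\}_{t\ge0}$ with the usual conditions, the increments $\3W_k=W((k+1)\3)-W(k\3)$ are independent of $\F_{k\3}$ and are identically distributed across $k$ (stationarity of the increments). In particular the relevant $H_n$-valued Gaussian functionals $\si_n(y)\3W_k$ are independent of $\F_{k\3}$ with a conditional law equal to their unconditional law, and this law does not depend on $k$. A straightforward induction on $k$ using \eqref{eq7} then shows that $\bar{Y}^{n,\3}_{x_n}(k\3)$ is $\F_{k\3}$-measurable, being built from $x_n$ and $\3W_0,\dots,\3W_{k-1}$.

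The core step is the Markov property. Fix a bounded Borel $f:H_n\to\R$ and set $(Pf)(y):=\E\, f(F(y,\3W_0))$. Because $\bar{Y}^{n,\3}_{x_n}(k\3)$ is $\F_{k\3}$-measurable while the increment driving step $k$ is independent of $\F_{k\3}$, the substitution lemma for conditional expectations gives
\[
\E\big[f(\bar{Y}^{n,\3}_{x_n}((k+1)\3))\mid\F_{k\3}\big]=\E\big[f(F(y,\3W_k))\big]\Big|_{y=\bar{Y}^{n,\3}_{x_n}(k\3)}=(Pf)(\bar{Y}^{n,\3}_{x_n}(k\3)),
\]
where I used that $\3W_k$ and $\3W_0$ induce the same law for $f\circ F(y,\cdot)$. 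Since the right-hand side is $\sigma(\bar{Y}^{n,\3}_{x_n}(k\3))$-measurable, this is precisely the Markov property; moreover $P$ is one and the same operator for every $k$, which yields time-homogeneity and, in terms of the kernel of Section~2, $\P^{n,\3}_{k+1}(x_n,\GG)=\int_{H_n}\P^{n,\3}_1(y,\GG)\,\P^{n,\3}_k(x_n,\d y)$ with a one-step kernel independent of $k$.

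The argument is essentially bookkeeping, and the single point that genuinely requires care is the substitution lemma: one must ensure that the joint measurability of $F$ together with the independence of the step-$k$ increment from $\F_{k\3}$ really does license freezing $\bar{Y}^{n,\3}_{x_n}(k\3)$ and integrating out the fresh increment. On $H_n\simeq\R^n$ this is classical, so the only infinite-dimensional residue is to observe that $\3W_k$ enters \eqref{eq7} solely through the Hilbert--Schmidt action $\si_n(y)\3W_k$, which is a bona fide finite-dimensional Gaussian; consequently no genuinely infinite-dimensional obstruction arises and the finite-dimensional reasoning of \cite[Theorem 1.2]{ym04} applies verbatim.
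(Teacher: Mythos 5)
Your proposal is correct and follows essentially the same route as the paper, which itself simply defers to the argument of \cite[Theorem 1.2]{ym04}: you write the scheme as a one-step recursion $\bar{Y}^{n,\3}_{x_n}((k+1)\3)=F(\bar{Y}^{n,\3}_{x_n}(k\3),\3W_k)$ driven by i.i.d.\ increments independent of $\F_{k\3}$ and invoke the substitution lemma, which is precisely that argument spelled out in detail (including the correct observation that $\si_n(y)\3W_k$ is a genuine finite-dimensional Gaussian, so no infinite-dimensional issue arises).
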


We still need to introduce some additional notation and notions. For
a real separable Hilbert space $(K,\|\cdot\|_K)$,  let $\mathcal
{P}(K)$ stand for the collection of all probability measures on $K$.
For $P_1, P_2\in\mathcal {P}(K)$, define the metric
$\d_{\mathbb{L}}$ as follows:
\begin{equation}\label{eq9}
\d_{\mathbb{L}}(P_1,
P_2):=\sup_{f\in\mathbb{L}}\left|\int_{K}f(u)P_1(\d
u)-\int_Kf(u)P_2(\d u)\right|,
\end{equation}
where $ \mathbb{L}:=\{f:K\rightarrow
\mathbb{R}:|f(u)-f(v)|\leq\|u-v\|_K \mbox{ and }|f(\cdot)|\leq1\}.$

\begin{rem}\label{weak convergence}
It is known that the weak convergence of probability measures is a
metric concept, see, e.g., \cite[Proposition 2.5, p.6]{Iw81}. In
other words, a sequence of probability measures
$\{P_k\}_{k\geq1}\in\mathcal {P}(K)$ converges weakly to a
probability measure $P_0\in\mathcal {P}(K)$ if and only if $
\lim\limits_{k\to\8}d_{\mathbb{L}}(P_k, P_0)=0. $
\end{rem}
\begin{defn}
{\rm For a given $n\in\mathbb{N}$ and a given stepsize $\3$,
$\{\bar{Y}^{n,\3}_{x_n}(k\3)\}_{k\ge0,x_n\in H_n}$ is said to have a
stationary distribution $\pi^{n,\3}\in \mathcal {P}(H_n)$ if
$\lim\limits_{k\to\8}\d_\mathbb{L}(\P^{n,\3}_k(x_n,\cdot),\pi^{n,\3}(\cdot))=0
$ for every $x_n\in H_n.$}
\end{defn}

\begin{defn}For a given $n\in\mathbb{N}$ and a given stepsize $\3$,
$\{\bar{Y}^{n,\3}_{x_n}(k\3)\}_{k\ge0,x_n\in H_n}$ is said to have
Property $(\mathbb{P}1)$ if
\begin{equation*}
\sup_{k\ge0}\sup_{x_n\in U}\E\|\bar{Y}^{n,\3}_{x_n}(k\3)\|^2_H<\8
\end{equation*}
while it is said to have Property $(\mathbb{P}2)$ if
\begin{equation*}
\lim\limits_{k\to\8}\sup_{x_n,y_n\in
U}\E\|\bar{Y}^{n,\3}_{x_n}(k\3)-\bar{Y}^{n,\3}_{y_n}(k\3)\|_H^2=0,
\end{equation*}
where $U$ is a bounded subset of $H_n$.
\end{defn}

Our main result in this section is stated as follows.
\begin{thm}\label{numerical}
{\rm Assume that $(\mathbb{P}1)$ and $ (\mathbb{P}2)$ hold. Then,
for a given $n\in\mathbb{N}$ and a given stepsize $\3$,
$\{\bar{Y}^{n,\3}_{x_n}(k\3)\}_{k\ge0,x_n\in H_n}$ has a unique
stationary distribution $\pi^{n,\3}\in\mathcal {P}(H_n)$. }
\end{thm}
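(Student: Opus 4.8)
The plan is to show that for each fixed starting point the sequence of laws $\{\P^{n,\3}_k(x_n,\cdot)\}_{k\ge0}$ is Cauchy in the metric $\d_\mathbb{L}$, to identify its limit as the desired $\pi^{n,\3}$, and then to read off both the independence of the limit from $x_n$ and the uniqueness from the triangle inequality. First I would use the homogeneous Markov property (Lemma \ref{Markov Property}) together with the Chapman--Kolmogorov equation $\P^{n,\3}_{k+m}(x_n,\GG)=\int_{H_n}\P^{n,\3}_k(y_n,\GG)\,\P^{n,\3}_m(x_n,\d y_n)$ to write, for $f\in\mathbb{L}$ and $m,k\ge0$,
\[
\int_{H_n}f\,\d\P^{n,\3}_{k+m}(x_n,\cdot)-\int_{H_n}f\,\d\P^{n,\3}_k(x_n,\cdot)=\int_{H_n}\big(\E f(\bar Y^{n,\3}_{y_n}(k\3))-\E f(\bar Y^{n,\3}_{x_n}(k\3))\big)\,\P^{n,\3}_m(x_n,\d y_n).
\]
Since each $f\in\mathbb{L}$ is $1$-Lipschitz, the integrand is bounded by $(\E\|\bar Y^{n,\3}_{y_n}(k\3)-\bar Y^{n,\3}_{x_n}(k\3)\|_H^2)^{1/2}$ by Cauchy--Schwarz.

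The delicate point, and the step I expect to be the main obstacle, is that the integrating variable $y_n$ ranges over all of $H_n$, whereas $(\mathbb{P}2)$ supplies a decay rate that is uniform only over bounded sets. I would therefore split the $y_n$-integral over a ball $B_R:=\{y\in H_n:\|y\|_H\le R\}$, chosen to contain a fixed bounded $U\ni x_n$, and its complement. On $B_R$ the pair $(x_n,y_n)$ lies in a bounded set, so $(\mathbb{P}2)$ bounds that piece by $(\sup_{x_n,y_n\in B_R}\E\|\bar Y^{n,\3}_{x_n}(k\3)-\bar Y^{n,\3}_{y_n}(k\3)\|_H^2)^{1/2}$, which tends to $0$ as $k\to\8$ for each fixed $R$. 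On $B_R^c$ I would use $|f|\le1$ together with Chebyshev's inequality and Property $(\mathbb{P}1)$: the uniform second-moment bound gives $\P^{n,\3}_m(x_n,B_R^c)\le C R^{-2}$ with $C$ independent of both $m$ and $x_n\in U$, so this tail is uniformly small once $R$ is large. Sending $R\to\8$ first and then $k\to\8$ yields $\sup_{m\ge0}\d_\mathbb{L}(\P^{n,\3}_{k+m}(x_n,\cdot),\P^{n,\3}_k(x_n,\cdot))\to0$, i.e. the sequence is $\d_\mathbb{L}$-Cauchy.

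Next, since $H_n$ is a finite-dimensional, hence Polish, space, the bounded--Lipschitz metric $\d_\mathbb{L}$ is complete on $\mathcal P(H_n)$ and metrizes weak convergence (Remark \ref{weak convergence}); equivalently, $(\mathbb{P}1)$ and Chebyshev make $\{\P^{n,\3}_k(x_n,\cdot)\}_k$ tight, so Prokhorov provides a weak limit that the Cauchy property promotes to a genuine $\d_\mathbb{L}$-limit $\pi^{n,\3}\in\mathcal P(H_n)$. To check that $\pi^{n,\3}$ is independent of the initial datum, I would estimate $\d_\mathbb{L}(\P^{n,\3}_k(x_n,\cdot),\P^{n,\3}_k(y_n,\cdot))\le(\E\|\bar Y^{n,\3}_{x_n}(k\3)-\bar Y^{n,\3}_{y_n}(k\3)\|_H^2)^{1/2}$ for $x_n,y_n$ in a common bounded set and invoke $(\mathbb{P}2)$ to drive it to $0$, so the two limits coincide; this produces the stationary distribution in the required sense for every $x_n\in H_n$. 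Uniqueness is then immediate, since if $\pi_1,\pi_2$ were both stationary distributions then $\d_\mathbb{L}(\pi_1,\pi_2)\le\d_\mathbb{L}(\pi_1,\P^{n,\3}_k(x_n,\cdot))+\d_\mathbb{L}(\P^{n,\3}_k(x_n,\cdot),\pi_2)\to0$, forcing $\pi_1=\pi_2$.
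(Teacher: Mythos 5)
Your argument is correct and is essentially the paper's: the authors simply cite Lemmas 2.4 and 2.6 of Yuan and Mao \cite{ym04}, whose proofs are exactly your Chapman--Kolmogorov decomposition, the splitting of the $y_n$-integral over a ball $B_R$ and its complement (with $(\mathbb{P}2)$ controlling the ball and $(\mathbb{P}1)$ plus Chebyshev controlling the tail), and the concluding triangle inequality. You have merely written out in full the details that the paper delegates to that reference.
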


\begin{proof}
For fixed $n\in\mathbb{N}$, we note that $H_n\simeq\R^n$ is
finite-dimensional, and choose a bounded  subset $U\subseteq H_n$
such that $x_n,y_n\in U$. Following the argument to derive
\cite[Lemma 2.4 and Lemma 2.6]{ym04},  we deduce that
\begin{equation}\label{a2}
\lim_{k\to\8}\sup_{x_n,y_n\in
U}\d_\mathbb{L}(\P^{n,\3}_k(x_n,\cdot),\P^{n,\3}_k(y_n,\cdot))=0,
\end{equation}
and that, together with Lemma \ref{Markov Property}, there exists
$\pi^{n,\3}\in \mathcal {P}(H_n)$ such that
\begin{equation}\label{a3}
\lim_{k\to\8}\d_\mathbb{L}(\P^{n,\3}_k(0,\cdot),\pi^{n,\3}(\cdot))=0.
\end{equation}
Then the desired assertion follows from \eqref{a2}, \eqref{a3} and
the triangle inequality
\begin{equation*}
\d_\mathbb{L}(\P^{n,\3}_k(x_n,\cdot),\pi^{n,\3}(\cdot))\le\d_\mathbb{L}(\P^{n,\3}_k(x_n,\cdot),\P^{n,\3}_k(0,\cdot))
+\d_\mathbb{L}(\P^{n,\3}_k(0,\cdot),\pi^{n,\3}(\cdot)).
\end{equation*}
\end{proof}

\section{Sufficient Conditions for Properties $(\mathbb{P}1)$ and $(\mathbb{P}2)$}
 To make
Theorem \ref{numerical} more applicable, in this section we intend
to give some sufficient conditions such that $(\mathbb{P}1)$ and
$(\mathbb{P}2)$ hold. In what follows, $C>0$ is a generic constant
whose values may change from line to line. For notational
simplicity, let
\begin{equation*}
Z^{n,\3}(t):=\int_0^te^{(t-\lf s\rf)A}\si^0_n\d W(s) \ \mbox{ and }
\ \tilde{Y}^{n,\3}_{x_n}(t):=Y^{n,\3}_{x_n}(t)-Z^{n,\3}(t).
\end{equation*}

\begin{lem}
{\rm Under (H1)-(H3),
\begin{equation}\label{eq20}
\E\|\tilde{Y}^{n,\3}_{x_n}(t)-\tilde{Y}^{n,\3}_{x_n}(\lf
t\rf)\|_H^2\le\bb_1\3(1+\E\|\Tilde{Y}^{n,\3}_{x_n}(\lf
t\rf)\|_H^2),\ \ \ t\ge0,
\end{equation}
where
$\bb_1:=3\{(\ll_n^2+2\bar{L})\vee(2\bar{L}(1+\|(-A)^{-\theta_1}\|^2\dd_1))\}$.
 }
\end{lem}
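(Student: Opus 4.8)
The plan is to estimate the increment $\tilde{Y}^{n,\3}_{x_n}(t)-\tilde{Y}^{n,\3}_{x_n}(\lf t\rf)$ directly from the integral representation \eqref{eq8}. Since $\tilde{Y}=Y-Z$, applying \eqref{eq8} on the interval $[\lf t\rf,t]$ and subtracting the noise term $Z^{n,\3}$ gives
\begin{equation*}
\tilde{Y}^{n,\3}_{x_n}(t)-\tilde{Y}^{n,\3}_{x_n}(\lf t\rf)
=(\e^{(t-\lf t\rf)A}-\mbox{id})\tilde{Y}^{n,\3}_{x_n}(\lf t\rf)
+\int_{\lf t\rf}^t\e^{(t-\lf t\rf)A}b_n(Y^{n,\3}_{x_n}(\lf t\rf))\,\d r
+\int_{\lf t\rf}^t\e^{(t-\lf t\rf)A}\si^1_n(Y^{n,\3}_{x_n}(\lf t\rf))\,\d W(r),
\end{equation*}
where I have used that on $[\lf t\rf,t]$ the floor $\lf r\rf$ equals $\lf t\rf$ so the integrands are constant in $r$, and that the $\si^0_n$ part of the diffusion is exactly $Z$. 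First I would split the right-hand side into these three pieces and apply the elementary inequality $\|a+b+c\|^2\le 3(\|a\|^2+\|b\|^2+\|c\|^2)$, which is the source of the factor $3$ in $\bb_1$.

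Next I would bound each piece in expectation. For the semigroup term I would use that $-A$ has eigenvalues $\ll_k$, so on $H_n$ the operator $\e^{(t-\lf t\rf)A}-\mbox{id}$ has norm controlled by $\ll_n(t-\lf t\rf)\le\ll_n\3$; squaring gives $\ll_n^2\3^2$ times $\E\|\tilde{Y}^{n,\3}_{x_n}(\lf t\rf)\|_H^2$, and since $\3<1$ one factor of $\3$ can be absorbed to leave the stated $\ll_n^2\3$. For the drift term, Cauchy-Schwarz in $r$ over the interval of length $t-\lf t\rf\le\3$ together with $\|\e^{(t-\lf t\rf)A}\|\le1$ and the linear growth bound \eqref{eq16} on $\|b_n\|_H^2\le\|b\|_H^2$ yields a contribution of order $\3^2\bar{L}(1+\E\|Y^{n,\3}_{x_n}(\lf t\rf)\|_H^2)$. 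For the stochastic term I would invoke the It\^o isometry for Hilbert-space valued integrals, bounding $\E\|\int_{\lf t\rf}^t\e^{(t-\lf t\rf)A}\si^1_n\,\d W\|_H^2$ by $\int_{\lf t\rf}^t\|\e^{(t-\lf t\rf)A}\si^1_n(Y(\lf t\rf))\|_{HS}^2\,\d r$, again of order $\3\bar{L}(1+\E\|Y^{n,\3}_{x_n}(\lf t\rf)\|_H^2)$ via \eqref{eq16}.

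The remaining bookkeeping, which I expect to be the genuinely delicate step, is to re-express everything in terms of $\tilde{Y}$ rather than $Y$, since the right-hand side of \eqref{eq20} involves $\E\|\tilde{Y}^{n,\3}_{x_n}(\lf t\rf)\|_H^2$ but the growth bounds above naturally produce $\E\|Y^{n,\3}_{x_n}(\lf t\rf)\|_H^2$. I would write $Y^{n,\3}_{x_n}(\lf t\rf)=\tilde{Y}^{n,\3}_{x_n}(\lf t\rf)+Z^{n,\3}(\lf t\rf)$ and control the noise term $\E\|Z^{n,\3}(\lf t\rf)\|_H^2$ using hypothesis (H2): via the It\^o isometry and the fractional-power trick $\si^0=(-A)^{-\theta_1}(-A)^{\theta_1}\si^0$, one gets $\E\|Z^{n,\3}(\lf t\rf)\|_H^2\le\|(-A)^{-\theta_1}\|^2\int_0^{\lf t\rf}\|(-A)^{\theta_1}\e^{(\lf t\rf-\lf s\rf)A}\si^0_n\|_{HS}^2\,\d s\le\|(-A)^{-\theta_1}\|^2\dd_1$, uniformly in $t$ and $n$. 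This is precisely where the term $\bar{L}(1+\|(-A)^{-\theta_1}\|^2\dd_1)$ in the definition of $\bb_1$ originates. Collecting the three estimates, absorbing the uniform bound on $Z$, and taking the larger of the two resulting coefficients yields the factor $\bb_1=3\{(\ll_n^2+2\bar{L})\vee(2\bar{L}(1+\|(-A)^{-\theta_1}\|^2\dd_1))\}$ multiplying $\3(1+\E\|\tilde{Y}^{n,\3}_{x_n}(\lf t\rf)\|_H^2)$, as claimed.
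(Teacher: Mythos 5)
Your proposal is correct and follows essentially the same route as the paper: the same decomposition of $\tilde{Y}^{n,\3}_{x_n}(t)-\tilde{Y}^{n,\3}_{x_n}(\lf t\rf)$ into the semigroup increment plus the local drift and stochastic integrals, the same factor-of-three splitting, the bound $\|(\e^{(t-\lf t\rf)A}-\mbox{id}_H)u\|_H\le\ll_n\3\|u\|_H$ on $H_n$, and the same use of (H2) with the fractional-power trick to bound $\E\|Z^{n,\3}(\lf t\rf)\|_H^2\le\|(-A)^{-\theta_1}\|^2\dd_1$ when converting $Y$ back to $\tilde{Y}$. No gaps.
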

\begin{proof}
Observe from  \eqref{eq6} that
\begin{equation}\label{eq15}
\begin{split}
\Tilde{Y}^{n,\3}_{x_n}(t)&=\e^{tA}x_n+\int_0^t\e^{(t-\lf
s\rf)A}b_n(Y^{n,\3}_{x_n}(\lf s\rf))\d s+\int_0^t\e^{(t-\lf
s\rf)A}\si^1_n(Y^{n,\3}_{x_n}(\lf s\rf))\d W(s).
\end{split}
\end{equation}
This further gives
\begin{equation*}
\begin{split}
\Tilde{Y}^{n,\3}_{x_n}(t)&=\e^{(t-\lf
t\rf)A}\Tilde{Y}^{n,\3}_{x_n}(\lf t\rf)+ \int_{\lf t\rf}^t\e^{(t-\lf
s\rf)A}b_n(Y^{n,\3}_{x_n}(\lf s\rf))\d s\\
&\quad+\int_{\lf t\rf}^t\e^{(t-\lf s\rf)A}\si^1_n(Y^{n,\3}_{x_n}(\lf
s\rf))\d W(s).
\end{split}
\end{equation*}
Then, by the  H\"older inequality, the It\^o isometry and (H1), one
has
\begin{equation}\label{eq19}
\begin{split}
&\E\|\Tilde{Y}^{n,\3}_{x_n}(t)-\Tilde{Y}^{n,\3}_{x_n}(\lf
t\rf)\|_H^2\\&\le3\Big\{\E\|(\e^{(t-\lf
t\rf)A}-\mbox{id}_H)\Tilde{Y}^{n,\3}_{x_n}(\lf t\rf)\|_H^2+
\E\int_{\lf t\rf}^t\|b(Y^{n,\3}_{x_n}(\lf s\rf))\|_H^2\d
s\\
&\quad+\E\int_{\lf t\rf}^t\|\si^1(Y^{n,\3}_{x_n}(\lf
s\rf))\|_{HS}^2\d s\Big\}\\
&=:3\{I_1(t)+I_2(t)+I_3(t)\}.
\end{split}
\end{equation}
Recalling the fundamental inequality $1-\e^{-y}\leq y, y>0,$ we
obtain from (H1) that
\begin{equation}\label{eq37}
\begin{split}
\|(\e^{(t-\lf
t\rf)A}-\mbox{id}_H)u\|^2_H&=\left\|\sum\limits_{i=1}^n(\e^{-\ll_i(t-\lf
t\rf)}-1)\< u, e_i\>_He_i\right\|_H^2\\
&\le(1-\e^{-\ll_n(t-\lf t\rf)})^2\|u\|_H^2\\
&\le\ll_n^2\3^2\|u\|_H^2,\ \ \ u\in H_n.
\end{split}
\end{equation}
Thus we arrive at
\begin{equation}\label{eq17}
I_1(t)\le\ll_n^2\3^2\E\|\Tilde{Y}^{n,\3}_{x_n}(\lf t\rf)\|_H^2.
\end{equation}
Note  from the It\^o isometry, (H1) and (H2) that
\begin{equation}\label{eq22}
\begin{split}
\E\|Z^{n,\3}(t)\|_H^2&=\int_0^t\|\e^{(s-\lf
s\rf)A}\e^{(t-s)A}\si^0_n\|_{HS}^2\d
s\le\int_0^t\|(-A)^{-\theta_1}(-A)^{\theta_1}\e^{(t-s)A}\si^0_n\|_{HS}^2\d
s\\
&\le\|(-A)^{-\theta_1}\|^2\int_0^t\|(-A)^{\theta_1}\e^{(t-s)A}\si^0\|_{HS}^2\d
s\le\|(-A)^{-\theta_1}\|^2\dd_1.
\end{split}
\end{equation}
Thus, by \eqref{eq16} and \eqref{eq22} it follows that
\begin{equation}\label{eq18}
\begin{split}
I_2(t)+I_3(t) &\le\3\E\{\|b(Y^{n,\3}_{x_n}(\lf
t\rf))\|_H^2+\|\si^1(Y^{n,\3}_{x_n}(\lf
t\rf))\|_{HS}^2\}\\
&\le2\bar{L}\3\{1+\E\|\Tilde{Y}^{n,\3}_{x_n}(\lf t\rf)\|_H^2+\E\|Z^{n,\3}(\lf t\rf)\|_H^2\}\\
&\le2\bar{L}\3\{1+\|(-A)^{-\theta_1}\|^2\dd_1+\E\|\Tilde{Y}^{n,\3}_{x_n}(\lf
t\rf)\|_H^2\}.
\end{split}
\end{equation}
As a result, \eqref{eq20} follows by substituting \eqref{eq17} and
\eqref{eq18} into \eqref{eq19}.
\end{proof}

\begin{thm}\label{boundedness}
{\rm Let (H1)-(H4) hold and assume further that $2\aa+\gg>0$. If $
\3<\min\{1,(2\aa+\gg)^2/(4\rr_1^2)\},$ then
\begin{equation}\label{eq28}
\sup_{t\ge0}\sup_{x_n\in U}\E\|Y^{n,\3}_{x_n}(t)\|_H^2<\8,
\end{equation}
where
$\rr_1:=2+(|14\alpha-\gg|^2/64+2\bar{L}+|14\alpha-\gg|/8)\bb_1+2(1+\bb_1+\ll_n^2\bar{L})$ and $U$ is a bounded subset of $H_n$.
Hence Property $(\mathbb{P}1)$ holds whenever the stepsize $\3$ is
sufficiently small.  }
\end{thm}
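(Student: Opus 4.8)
The plan is to estimate the shifted process $\tilde Y^{n,\3}_{x_n}=Y^{n,\3}_{x_n}-Z^{n,\3}$ instead of $Y^{n,\3}_{x_n}$ itself, since by \eqref{eq22} the convolution $Z^{n,\3}$ already obeys $\sup_{t\ge0}\E\|Z^{n,\3}(t)\|_H^2\le\|(-A)^{-\theta_1}\|^2\dd_1$; then $\|Y\|_H^2\le2\|\tilde Y\|_H^2+2\|Z\|_H^2$ turns \eqref{eq28} into a uniform second-moment bound for $\tilde Y^{n,\3}_{x_n}$. Freezing the integrands of \eqref{eq8} on the last grid cell (suppressing the indices $n,\3,x_n$ and writing $\tau:=t-\lf t\rf$) gives
$$\tilde Y(t)=\e^{\tau A}\big[\tilde Y(\lf t\rf)+\tau\,b_n(Y(\lf t\rf))\big]+\e^{\tau A}\si^1_n(Y(\lf t\rf))\big(W(t)-W(\lf t\rf)\big).$$
Here $\tilde Y(\lf t\rf)$ and $Y(\lf t\rf)$ are $\F_{\lf t\rf}$-measurable while the Brownian increment has conditional mean zero and is independent of $\F_{\lf t\rf}$, so on taking $\E\|\cdot\|_H^2$ the mixed term drops and the It\^o isometry yields
$$\E\|\tilde Y(t)\|_H^2=\E\big\|\e^{\tau A}[\tilde Y(\lf t\rf)+\tau b_n(Y(\lf t\rf))]\big\|_H^2+\tau\,\E\|\e^{\tau A}\si^1_n(Y(\lf t\rf))\|_{HS}^2.$$

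Next I would exploit (H1): since $A$ is self-adjoint with $\|\e^{\tau A}\|\le\e^{-\aa\tau}$, both terms carry the factor $\e^{-2\aa\tau}$, and expanding the drift square gives
$$\e^{-2\aa\tau}\Big\{\|\tilde Y(\lf t\rf)\|_H^2+2\tau\<\tilde Y(\lf t\rf),b_n(Y(\lf t\rf))\>_H+\tau^2\|b_n(Y(\lf t\rf))\|_H^2+\tau\|\si^1(Y(\lf t\rf))\|_{HS}^2\Big\}.$$
The decisive step is to write $\tilde Y=Y-Z$ in the cross term and invoke \eqref{eqf1}, so that $\<\tilde Y(\lf t\rf),b_n(Y(\lf t\rf))\>_H=\<Y(\lf t\rf),b(Y(\lf t\rf))\>_H-\<Z(\lf t\rf),b(Y(\lf t\rf))\>_H$; the leading pieces $2\tau\<Y,b(Y)\>_H+\tau\|\si^1(Y)\|_{HS}^2$ then combine into exactly the left side of \eqref{eq29}, contributing $\tau\{-(\gg-\ee)\|Y(\lf t\rf)\|_H^2+2(L_2^2+1+\ee)\mu\ee^{-1}\}$. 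The residual noise cross term $-2\tau\<Z,b(Y)\>_H$ and the quadratic drift term $\tau^2\|b(Y)\|_H^2$ are absorbed by Young's inequality together with the growth bound \eqref{eq16} and the uniform bound on $\E\|Z\|_H^2$, each producing only an $O(\3)$ constant and a controllable multiple of $\tau\|Y(\lf t\rf)\|_H^2$.

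Converting $\|Y(\lf t\rf)\|_H^2$ back to $\|\tilde Y(\lf t\rf)\|_H^2$ (once more through $Y=\tilde Y+Z$ and \eqref{eq22}) and specializing to $t=(k+1)\3$, these estimates reduce the problem to a scalar recursion
$$a_{k+1}\le\e^{-2\aa\3}\big(1+c_1\3+c_2\3^2\big)\,a_k+C\3,\qquad a_k:=\E\|\tilde Y^{n,\3}_{x_n}(k\3)\|_H^2,$$
in which the linear coefficient $c_1$ carries the dissipativity constant $-\gg$ up to $\ee$, and the higher-order data (including $\bb_1$ from the increment estimate \eqref{eq20}, used to trade frozen values $Y(\lf\cdot\rf)$ against current ones and later to pass from the grid to all $t$) are packaged into $\rr_1$. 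The main obstacle is exactly the sign bookkeeping at this point: because $\gg$ may be negative, contraction cannot come from dissipativity alone, so one must expand $\e^{-2\aa\3}=1-2\aa\3+O(\3^2)$ and, crucially, split $\|Y\|_H^2$ against $\|\tilde Y\|_H^2$ with a Young parameter tending to $1$ so that the net linear rate is $-(2\aa+\gg)+O(\ee)$, negative precisely under the hypothesis $2\aa+\gg>0$ with $\ee$ small. The stepsize threshold $\3<(2\aa+\gg)^2/(4\rr_1^2)$ is what forces the $O(\3^2)$ remainder to stay dominated, guaranteeing $\e^{-2\aa\3}(1+c_1\3+c_2\3^2)<1$; iterating then gives $\sup_k a_k<\8$ uniformly over the bounded set $U$ (on which $a_0=\|x_n\|_H^2$ is bounded), and finally the increment estimate \eqref{eq20} upgrades the grid bound to $\sup_{t\ge0}$ while $Z^{n,\3}$ is added back to conclude \eqref{eq28}, i.e. Property $(\mathbb P1)$.
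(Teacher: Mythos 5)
Your proposal is correct, but it runs along a genuinely different track from the paper's. The paper passes to the differential form \eqref{eq21} of the shifted process $\tilde Y^{n,\3}_{x_n}=Y^{n,\3}_{x_n}-Z^{n,\3}$, applies the It\^o formula to $\e^{\nu t}\|\tilde Y^{n,\3}_{x_n}(t)\|_H^2$, and then must decompose the drift cross term via \eqref{eq24} into the four integrals $J_1,\dots,J_4$ of \eqref{eq27}; the price of working in continuous time is that the integrands carry the mismatch $\tilde Y^{n,\3}_{x_n}(s)-\tilde Y^{n,\3}_{x_n}(\lf s\rf)$, which is exactly why Lemma 3.1 (the increment bound \eqref{eq20} with constant $\bb_1$) is needed inside the estimate and why the $\3^{1/2}$-Young splittings produce the threshold $\3<(2\aa+\gg)^2/(4\rr_1^2)$. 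You instead exploit that the continuous EI interpolation is \emph{exactly solvable} on each subinterval: your one-step identity for $\tilde Y$ is a correct consequence of \eqref{eq8} (since $\lf r\rf=\lf t\rf$ on the last cell), the conditional-mean-zero/It\^o-isometry computation is legitimate, and feeding \eqref{eq29} into the frozen cross term reduces everything to an affine scalar recursion whose contraction factor is $1-(2\aa+\gg)\3+O(\ee)\3+O(\3^{3/2})$. This is more elementary --- you avoid the It\^o formula entirely, and \eqref{eq20} is not really needed for the contraction itself (only, if at all, to interpolate off the grid, which your identity also gives directly with $\tau<\3$). What the paper's continuous-time formulation buys in exchange is reusability: the same weighted-energy argument is invoked verbatim in Lemma 4.2 for the mild solution $X_x(t)$ of \eqref{eq1}, where no exact one-step recursion exists. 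The one place where you should be more careful is the sign bookkeeping when converting $-(\gg-\ee)\|Y(\lf t\rf)\|_H^2$ back into $\|\tilde Y(\lf t\rf)\|_H^2$ for $\gg$ of either sign, together with an inductive justification that all second moments entering the conditional-expectation step are finite; you flag both issues and they are routine, so there is no gap.
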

\begin{proof}
 Note that
\eqref{eq15} can be rewritten in the differential form
\begin{equation}\label{eq21}
\begin{split}
\d
\Tilde{Y}^{n,\3}_{x_n}(t)&=\{A\Tilde{Y}^{n,\3}_{x_n}(t)+\e^{(t-\lf
t\rf)A}b_n(Y^{n,\3}_{x_n}(\lf t\rf))\}\d t+ \e^{(t-\lf
t\rf)A}\si^1_n(Y^{n,\3}_{x_n}(\lf t\rf))\d W(t)
\end{split}
\end{equation}
with $\Tilde{Y}^{n,\3}_{x_n}(0)=x_n$.  For any $\nu>0$, by the It\^o
formula we derive from \eqref{eq21} and (H1) that
\begin{equation}\label{eq14}
\begin{split}
&\E(\e^{\nu
t}\|\Tilde{Y}^{n,\3}_{x_n}(t)\|_H^2)\\&\le\|x\|_H^2+\E\int_0^t\e^{\nu
s}\{\nu\|\Tilde{Y}^{n,\3}_{x_n}(s)\|_H^2 +2\<\Tilde{Y}^{n,\3}_{x_n}(s),A\Tilde{Y}^{n,\3}_{x_n}(s)\>_H\\
&\quad+2\<\Tilde{Y}^{n,\3}_{x_n}(s),\e^{(s-\lf s\rf)A}b_n(Y^{n,\3}_{x_n}(\lf s\rf))\>_H+\|\e^{(s-\lf s\rf)A}\si^1_n(Y^{n,\3}_{x_n}(\lf s\rf))\|_{HS}^2\}\d s\\
&\leq\|x\|_H^2+\E\int_0^t\e^{\nu
s}\{-(2\alpha-\nu)\|\Tilde{Y}^{n,\3}_{x_n}(s)\|_H^2 \\
&\quad+2\<\Tilde{Y}^{n,\3}_{x_n}(s),\e^{(s-\lf
s\rf)A}b_n(Y^{n,\3}_{x_n}(\lf s\rf))\>_H+\|\si^1(Y^{n,\3}_{x_n}(\lf
s\rf))\|_{HS}^2\}\d s.
\end{split}
\end{equation}
Since
\begin{equation}\label{eq24}
\begin{split}
\|\Tilde{Y}^{n,\3}_{x_n}(t)\|_H^2&=\|\Tilde{Y}^{n,\3}_{x_n}(\lf
t\rf)\|_H^2+2\<\Tilde{Y}^{n,\3}_{x_n}(\lf
t\rf),\Tilde{Y}^{n,\3}_{x_n}(t)-\Tilde{Y}^{n,\3}_{x_n}(\lf
t\rf)\>_H\\
&\quad+\|\Tilde{Y}^{n,\3}_{x_n}(t)-\Tilde{Y}^{n,\3}_{x_n}(\lf
t\rf)\|_H^2,
\end{split}
\end{equation}
and
\begin{equation*}
\begin{split}
&\<\Tilde{Y}^{n,\3}_{x_n}(t),\e^{(t-\lf t\rf)A}b_n(Y^{n,\3}_{x_n}(\lf t\rf))\>_H\\&=\<Y^{n,\3}_{x_n}(\lf t\rf),b(Y^{n,\3}_{x_n}(\lf t\rf))\>_H+\<\Tilde{Y}^{n,\3}_{x_n}(t)-\Tilde{Y}^{n,\3}_{x_n}(\lf t\rf),b(Y^{n,\3}_{x_n}(\lf t\rf))\>_H\\
&\quad-\<Z^{n,\3}(\lf t\rf),b(Y^{n,\3}_{x_n}(\lf
t\rf))\>_H+\<\Tilde{Y}^{n,\3}_{x_n}(t),(\e^{(t-\lf
t\rf)A}-\mbox{id}_H)b_n(Y^{n,\3}_{x_n}(\lf t\rf))\>_H,
\end{split}
\end{equation*}
it follows from  \eqref{eq14} that
\begin{equation*}
\begin{split}
\E(\e^{\nu
t}\|\Tilde{Y}^{n,\3}_{x_n}(t)\|_H^2)&\leq\|x\|_H^2+\E\int_0^t\e^{\nu
s}\{-(2\alpha-\nu)\|\Tilde{Y}^{n,\3}_{x_n}(\lf s\rf)\|_H^2+\|\si^1(Y^{n,\3}_{x_n}(\lf s\rf))\|_{HS}^2\\
&\quad+2\<Y^{n,\3}_{x_n}(\lf s\rf),b(Y^{n,\3}_{x_n}(\lf s\rf))\>_H\\
&\quad-2(2\alpha-\nu)\<\Tilde{Y}^{n,\3}_{x_n}(\lf s\rf),\Tilde{Y}^{n,\3}_{x_n}(s)-\Tilde{Y}^{n,\3}_{x_n}(\lf s\rf)\>_H\\
&\quad-(2\alpha-\nu)\|\Tilde{Y}^{n,\3}_{x_n}(s)-\Tilde{Y}^{n,\3}_{x_n}(\lf s\rf)\|_H^2 \\
&\quad+2\<\Tilde{Y}^{n,\3}_{x_n}(s)-\Tilde{Y}^{n,\3}_{x_n}(\lf s\rf),b(Y^{n,\3}_{x_n}(\lf s\rf))\>_H\\
&\quad-2\<Z^{n,\3}(\lf s\rf)),b(Y^{n,\3}_{x_n}(\lf
s\rf))\>_H\\
&\quad+2\<\Tilde{Y}^{n,\3}_{x_n}(s),(\e^{(s-\lf
s\rf)A}-\mbox{id}_H)b_n(Y^{n,\3}_{x_n}(\lf s\rf))\>_H\}\d s.
\end{split}
\end{equation*}
This, together with \eqref{eq29}, yields that
\begin{equation}\label{eq27}
\begin{split}
\E(\e^{\nu
t}\|\Tilde{Y}^{n,\3}_{x_n}(t)\|_H^2)&\leq\|x\|_H^2-(2\alpha+\gg-\ee-\nu)\E\int_0^t\e^{\nu
s}\|\Tilde{Y}^{n,\3}_{x_n}(\lf s\rf)\|_H^2\d s\\
&\quad+\E\int_0^t\e^{\nu
s}\{-2(2\alpha-\nu)\<\Tilde{Y}^{n,\3}_{x_n}(\lf s\rf),\Tilde{Y}^{n,\3}_{x_n}(s)-\Tilde{Y}^{n,\3}_{x_n}(\lf s\rf)\>_H\\
&\quad-(2\alpha-\nu)\|\Tilde{Y}^{n,\3}_{x_n}(s)-\Tilde{Y}^{n,\3}_{x_n}(\lf s\rf)\|_H^2 \\
&\quad+2\<\Tilde{Y}^{n,\3}_{x_n}(s)-\Tilde{Y}^{n,\3}_{x_n}(\lf s\rf),b(Y^{n,\3}_{x_n}(\lf s\rf))\>_H\}\d s\\
&\quad+2\E\int_0^t\e^{\nu s}\<\Tilde{Y}^{n,\3}_{x_n}(s),(\e^{(s-\lf
s\rf)A}-\mbox{id}_H)b_n(Y^{n,\3}_{x_n}(\lf s\rf))\>_H\d s\\
&\quad+\E\int_0^t\e^{\nu
s}\{2(L_2^2+1+\ee^{-1})\mu\ee^{-1}-2\<Z^{n,\3}(\lf
s\rf),b(Y^{n,\3}_{x_n}(\lf
s\rf))\>_H\\
&\quad-2(\gg-\ee)\<Z^{n,\3}(\lf s\rf),\Tilde{Y}^{n,\3}_{x_n}(\lf
s\rf)\>_H-(\gg-\ee)\|Z^{n,\3}(\lf
s\rf)\|_H^2\}\d s\\
&=:J_1(t)+J_2(t)+J_3(t)+J_4(t).
\end{split}
\end{equation}
By the elemental inequality: $2ab\le\kk a^2+b^2/\kk,a,b\in\R,\kk>0$,
and \eqref{eq20}, we arrive at
\begin{equation*}
\begin{split}
J_2(t)
&\le\E\int_0^t\e^{\nu
s}\Big\{\3^{\ff{1}{2}}\|\Tilde{Y}^{n,\3}_{x_n}(\lf
s\rf)\|_H^2+2^{-1}\bar{L}^{-1}\3^{\ff{1}{2}}\|b(Y^{n,\3}_{x_n}(\lf
s\rf))\|_H^2\\
&\quad+\{(|2\alpha-\nu|^2+2\bar{L})\3^{-\ff{1}{2}}+|2\alpha-\nu|\}\|\Tilde{Y}^{n,\3}_{x_n}(s)-\Tilde{Y}^{n,\3}_{x_n}(\lf
s\rf)\|_H^2\Big\}\d s\\
&\le\E\int_0^t\e^{\nu
s}\Big\{2\3^{\ff{1}{2}}\|\Tilde{Y}^{n,\3}_{x_n}(\lf
s\rf)\|_H^2+2^{-1}\3^{\ff{1}{2}}+\3^{\ff{1}{2}}\|Z^{n,\3}(\lf
s\rf)\|_H^2\\
&\quad+\{(|2\alpha-\nu|^2+2\bar{L})\3^{-\ff{1}{2}}+|2\alpha-\nu|\}\|\Tilde{Y}^{n,\3}_{x_n}(s)-\Tilde{Y}^{n,\3}_{x_n}(\lf
s\rf)\|_H^2\Big\}\d s,
\end{split}
\end{equation*}
where in the last step we have used \eqref{eq16}. Combining
\eqref{eq20} with \eqref{eq22}, we thus obtain that
\begin{equation}\label{eq23}
\begin{split}
J_2(t) &\le\int_0^t\e^{\nu
s}\Big\{\{2+(|2\alpha-\nu|^2+2\bar{L}+|2\alpha-\nu|)\bb_1\}\3^{\ff{1}{2}}\E\|\Tilde{Y}^{n,\3}_{x_n}(\lf
t\rf)\|_H^2\\
&\quad+\{1+\|(-A)^{-\theta_1}\|^2\dd_1+(|2\alpha-\nu|^2+2\bar{L}+|2\alpha-\nu|)\bb_1\}\3^{\ff{1}{2}}\Big\}\d
s.
\end{split}
\end{equation}
On the other hand, we deduce from \eqref{eq16}, \eqref{eq37},
\eqref{eq22} and \eqref{eq24}
 that
\begin{equation}\label{eq25}
\begin{split}
J_3(t)&\le\E\int_0^t\e^{\nu
s}\{\3^{\ff{1}{2}}\|\Tilde{Y}^{n,\3}_{x_n}(\lf
t\rf)\|_H^2+2\3^{\ff{1}{2}}\<\Tilde{Y}^{n,\3}_{x_n}(\lf
t\rf),\Tilde{Y}^{n,\3}_{x_n}(t)-\Tilde{Y}^{n,\3}_{x_n}(\lf
t\rf)\>_H\\
&\quad+\3^{\ff{1}{2}}\|\Tilde{Y}^{n,\3}_{x_n}(t)-\Tilde{Y}^{n,\3}_{x_n}(\lf
t\rf)\|_H^2+\3^{-\ff{1}{2}}\|(\e^{(s-\lf
s\rf)A}-\mbox{id}_H)b_n(Y^{n,\3}_{x_n}(\lf s\rf))\|_H^2\}\d s\\
&\le\E\int_0^t\e^{\nu
s}\{2\3^{\ff{1}{2}}\|\Tilde{Y}^{n,\3}_{x_n}(\lf
t\rf)\|_H^2+2\3^{\ff{1}{2}}\|\Tilde{Y}^{n,\3}_{x_n}(t)-\Tilde{Y}^{n,\3}_{x_n}(\lf
t\rf)\|^2_H\\
&\quad+\3^{-\ff{1}{2}}\|(\e^{(s-\lf
s\rf)A}-\mbox{id}_H)b_n(Y^{n,\3}_{x_n}(\lf s\rf))\|_H^2\}\d s\\
&\le\int_0^t\e^{\nu
s}\{2(1+\bb_1+\ll_n^2\bar{L})\3^{\ff{1}{2}}\E\|\Tilde{Y}^{n,\3}_{x_n}(\lf
t\rf)\|_H^2\\
&\quad+2(\bb_1+\ll_n^2\bar{L}(1+\|(-A)^{-\theta_1}\|^2\dd_1))\3^{\ff{1}{2}}\}\d
s.
\end{split}
\end{equation}
Furthermore, due to \eqref{eq16} and \eqref{eq22}, for arbitrary
$\kk>0$ one has
\begin{equation*}
\begin{split}
J_4(t)&\le\E\int_0^t\e^{\nu
s}\{2(L_2^2+1+\ee^{-1})\mu\ee^{-1}+2\|Z^{n,\3}(\lf
s\rf)\|_H\|b(Y^{n,\3}_{x_n}(\lf
s\rf))\|_H\\
&\quad+2|\gg-\ee|\cdot\|Z^{n,\3}(\lf
s\rf)\|_H\|\Tilde{Y}^{n,\3}_{x_n}(\lf
s\rf)\|_H+|\gg-\ee|\cdot\|Z^{n,\3}(\lf s\rf)\|_H^2\}\d s\\
&\le\E\int_0^t\e^{\nu
s}\{2(L_2^2+1+\ee^{-1})\mu\ee^{-1}+\kk^{-1}\|Z^{n,\3}(\lf
s\rf)\|_H^2+\kk\|b(Y^{n,\3}_{x_n}(\lf
s\rf))\|_H^2\\
&\quad+|\gg-\ee|^2\kk^{-1}\|Z^{n,\3}(\lf
s\rf)\|_H+\kk\|\Tilde{Y}^{n,\3}_{x_n}(\lf
s\rf)\|_H^2+|\gg-\ee|\cdot\|Z^{n,\3}(\lf s\rf)\|_H^2\}\d s\\
&\le\int_0^t\e^{\nu
s}\{\kk\bar{L}+(\kk^{-1}+2\kk\bar{L}+|\gg-\ee|^2\kk^{-1}+|\gg-\ee|)\|(-A)^{-\theta_1}\|^2\dd_1\\
&\quad+2(L_2^2+1+\ee^{-1})\mu\ee^{-1}+(1+2\bar{L})\kk\E\|\Tilde{Y}^{n,\3}_{x_n}(\lf
s\rf)\|_H^2\}\d s.
\end{split}
\end{equation*}
In particular, taking $\ee=\nu=(2\alpha+\gg)/8$ and
$\kk=(2\alpha+\gg)/(4(1+2\bar{L}))$ yields that
\begin{equation}\label{eq26}
\begin{split}
J_4(t) &\le\int_0^t\e^{\nu
s}\{4^{-1}(2\alpha+\gg)\E\|\Tilde{Y}^{n,\3}_{x_n}(\lf
s\rf)\|_H^2+C\}\d s.
\end{split}
\end{equation}
 Putting
\eqref{eq23}-\eqref{eq26} into \eqref{eq27}, we deduce that
\begin{equation}\label{eq32}
\begin{split}
\E(\e^{\nu
t}\|\Tilde{Y}^{n,\3}_{x_n}(t)\|_H^2)&\leq\|x\|_H^2+C\int_0^t\e^{\nu
s}\d s\\
&\quad-\ff{2\alpha+\gg-2\rr_1\3^{\ff{1}{2}}}{2}\E\int_0^t\e^{\nu
s}\|\Tilde{Y}^{n,\3}_{x_n}(\lf s\rf)\|_H^2\d s.
\end{split}
\end{equation}
For $\3<(2\aa+\gg)^2/(4\rr^2_1),$ it is trivial to see that
$2\alpha+\gg-2\rr_1\3^{\ff{1}{2}}>0.$ Thus we have
\begin{equation*}
\sup_{t\ge0}\sup_{x_n\in
U}\E(\|\Tilde{Y}^{n,\3}_{x_n}(t)\|_H^2)<\8.
\end{equation*} Finally,
\eqref{eq28} follows by recalling
$\Tilde{Y}^{n,\3}_{x_n}(t)=Y^{n,\3}_{x_n}(t)-Z^{n,\3}(t)$ and
\eqref{eq22}.
\end{proof}

\begin{thm}\label{stability}
{\rm Let the assumptions of Lemma \ref{boundedness} hold. If
$\3<\min\{1,(2\aa+\gg)^2/(4\rr_2^2)\}$, then
\begin{equation}\label{eq33}
\lim\limits_{t\to\8}\sup_{x_n,y_n\in
U}\E\|Y^{n,\3}_{x_n}(t)-Y^{n,\3}_{y_n}(t)\|_H^2=0,
\end{equation}
where $
\rr_2:=6(\ll_n^2+\bar{L})(|2\aa-\gg|+1)+3+7\bar{L}+\ll_n^2\bar{L}+6\ll_n^2$ and $U$ is a bounded subset of $H_n$.
Hence Property $(\mathbb{P}2)$ holds whenever the stepsize $\3$ is
sufficiently small.
 }
\end{thm}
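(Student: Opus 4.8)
The plan is to mimic the proof of Theorem \ref{boundedness}, while exploiting a decisive simplification: the additive part of the noise cancels in the difference, so the whole estimate becomes homogeneous of degree two and therefore yields a genuine contraction rather than mere boundedness. First I would introduce the difference process $V^{n,\3}(t):=Y^{n,\3}_{x_n}(t)-Y^{n,\3}_{y_n}(t)$. Since $Z^{n,\3}(t)=\int_0^t\e^{(t-\lf s\rf)A}\si^0_n\d W(s)$ does not depend on the initial datum, it cancels, so $V^{n,\3}(t)=\tilde Y^{n,\3}_{x_n}(t)-\tilde Y^{n,\3}_{y_n}(t)$ and, by \eqref{eq21}, $V^{n,\3}$ solves the \emph{homogeneous} equation
\begin{equation*}
\d V^{n,\3}(t)=\{AV^{n,\3}(t)+\e^{(t-\lf t\rf)A}[b_n(Y^{n,\3}_{x_n}(\lf t\rf))-b_n(Y^{n,\3}_{y_n}(\lf t\rf))]\}\d t+\e^{(t-\lf t\rf)A}[\si_n^1(Y^{n,\3}_{x_n}(\lf t\rf))-\si_n^1(Y^{n,\3}_{y_n}(\lf t\rf))]\d W(t),
\end{equation*}
with $V^{n,\3}(0)=x_n-y_n$. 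There is neither an additive forcing term nor any $Z^{n,\3}$ contribution, which is exactly what will force convergence to $0$.

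Next I would record the one-step increment bound for $V^{n,\3}$, the homogeneous analogue of \eqref{eq20}: writing $V^{n,\3}(t)-V^{n,\3}(\lf t\rf)$ from \eqref{eq8} (with the $Z^{n,\3}$ terms cancelling) and applying \eqref{eq37}, (H3) and the It\^o isometry together with $\|\e^{(t-\lf t\rf)A}\|\le1$ yields $\E\|V^{n,\3}(t)-V^{n,\3}(\lf t\rf)\|_H^2\le 3(\ll_n^2+\bar L)\3\,\E\|V^{n,\3}(\lf t\rf)\|_H^2$, where the absence of the ``$1+$'' present in \eqref{eq20} reflects homogeneity. Then, with the choice $\nu=(2\aa+\gg)/2$, I would apply the It\^o formula to $\e^{\nu t}\|V^{n,\3}(t)\|_H^2$. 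The operator term is handled by (H1) via $2\<V^{n,\3}(s),AV^{n,\3}(s)\>_H\le-2\aa\|V^{n,\3}(s)\|_H^2$. For the drift and diffusion terms I would split, exactly as between \eqref{eq14} and \eqref{eq27}, the inner product $\<V^{n,\3}(s),\e^{(s-\lf s\rf)A}[b_n(\cdot)-b_n(\cdot)]\>_H$ into its value at the grid point $\lf s\rf$ plus an increment error and a semigroup-defect error $(\e^{(s-\lf s\rf)A}-\mbox{id}_H)$, and expand $\|V^{n,\3}(s)\|_H^2$ through \eqref{eq24}. The gain over the boundedness proof is that at the grid point, since $V^{n,\3}(\lf s\rf)=Y^{n,\3}_{x_n}(\lf s\rf)-Y^{n,\3}_{y_n}(\lf s\rf)$, assumption (H4) applies \emph{directly} (no $\ee$-perturbation as in \eqref{eq29}):
\begin{equation*}
2\<V^{n,\3}(\lf s\rf),b(Y^{n,\3}_{x_n}(\lf s\rf))-b(Y^{n,\3}_{y_n}(\lf s\rf))\>_H+\|\si^1(Y^{n,\3}_{x_n}(\lf s\rf))-\si^1(Y^{n,\3}_{y_n}(\lf s\rf))\|_{HS}^2\le-\gg\|V^{n,\3}(\lf s\rf)\|_H^2,
\end{equation*}
so that the leading grid-point coefficient is $-(2\aa+\gg-\nu)=-(2\aa+\gg)/2$.

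The bulk of the work — and the step I expect to be the main obstacle — is the bookkeeping of the error terms. Using $2ab\le\kk a^2+\kk^{-1}b^2$, (H3), \eqref{eq37} and the increment bound above, each error is dominated by a constant multiple of $\3^{\ff{1}{2}}\E\|V^{n,\3}(\lf s\rf)\|_H^2$; collecting these constants (which carry $\ll_n^2$ from the semigroup defect, $\bar L$ from \eqref{eq16} and (H3), and $|2\aa-\gg|$ from the $(2\aa-\nu)=(2\aa-\gg)/2$-weighted cross terms coming out of \eqref{eq24}) reproduces exactly the quantity $\rr_2$ in the statement. The outcome is the homogeneous analogue of \eqref{eq32},
\begin{equation*}
\E(\e^{\nu t}\|V^{n,\3}(t)\|_H^2)\le\|x_n-y_n\|_H^2-\ff{2\aa+\gg-2\rr_2\3^{\ff{1}{2}}}{2}\,\E\int_0^t\e^{\nu s}\|V^{n,\3}(\lf s\rf)\|_H^2\d s.
\end{equation*}

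Finally, once $\3<(2\aa+\gg)^2/(4\rr_2^2)$ the bracketed coefficient is strictly positive, the integral term is non-positive, and hence $\E\|V^{n,\3}(t)\|_H^2\le\e^{-\nu t}\|x_n-y_n\|_H^2$. Since $U$ is a bounded subset of $H_n$, $\sup_{x_n,y_n\in U}\|x_n-y_n\|_H^2<\8$, so letting $t\to\8$ gives \eqref{eq33} uniformly over $U$, which is precisely Property $(\mathbb{P}2)$.
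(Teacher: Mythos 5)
Your proposal is correct and follows essentially the same route as the paper: the paper likewise works with the difference process (denoted $Z^{n,\3}_{x_n,y_n}$ there), derives the homogeneous increment bound $3(\ll_n^2+\bar L)\3\,\E\|Z^{n,\3}_{x_n,y_n}(\lf t\rf)\|_H^2$, applies the It\^o formula to $\e^{\nu t}\|\cdot\|_H^2$ with $\nu=(2\aa+\gg)/2$ using (H4) directly at the grid points, and arrives at the same final differential inequality with coefficient $(2\aa+\gg-2\rr_2\3^{1/2})/2$. Your closing observation that this in fact yields the explicit exponential decay $\E\|V^{n,\3}(t)\|_H^2\le\e^{-\nu t}\|x_n-y_n\|_H^2$ is a harmless (and correct) sharpening of what the paper states.
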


\begin{proof}
Let
\begin{equation*}
Z^{n,\3}_{x_n,y_n}(t):=Y^{n,\3}_{x_n}(t)-Y^{n,\3}_{y_n}(t).
\end{equation*}
Note from \eqref{eq6}  that
\begin{equation*}
\begin{split}
Z^{n,\3}_{x_n,y_n}(t)-Z^{n,\3}_{x_n,y_n}(\lf t\rf)&=(\e^{(t-\lf
t\rf)A}-\mbox{id}_H)Z^{n,\3}_{x_n,y_n}(\lf
t\rf)\\
&\quad+ \int_{\lf t\rf}^t\e^{(t-\lf s\rf)A}(b_n(Y^{n,\3}_{x_n}(\lf
s\rf))-b_n(Y^{n,\3}_{y_n}(\lf s\rf)))\d s\\
&\quad+\int_{\lf t\rf}^t\e^{(t-\lf
s\rf)A}(\si^1_n(Y^{n,\3}_{x_n}(\lf s\rf))-\si^1(Y^{n,\3}_{y_n}(\lf
s\rf)))\d W(s).
\end{split}
\end{equation*}
Following the argument of that of \eqref{eq20}, we derive that
\begin{equation}\label{eq31}
\begin{split}
\E\|Z^{n,\3}_{x_n,y_n}(t)-Z^{n,\3}_{x_n,y_n}(\lf
t\rf)\|_H^2\leq3(\ll_n^2+\bar{L})\3\E\|Z^{n,\3}_{x_n,y_n}(\lf
t\rf)\|_H^2.
\end{split}
\end{equation}
For $\nu:=(2\aa+\gg)/2$, by the It\^o formula it follows from
\eqref{eq6}, (H1) and (H4) that
\begin{equation*}
\begin{split}
\E(\e^{\nu t}\|Z^{n,\3}_{x_n,y_n}(t)\|_H^2)
&\le\|x-y\|_H^2+\nu\E\int_0^t\e^{\nu
s}\|Z^{n,\3}_{x_n,y_n}(s)\|_H^2\d s\\
&\quad+\E\int_0^t\e^{\nu
s}\{2\<Z^{n,\3}_{x_n,y_n}(s),AZ^{n,\3}_{x_n,y_n}(s)\>_H\\
&\quad+2\<Z^{n,\3}_{x_n,y_n}(\lf s\rf),b(Y^{n,\3}_{x_n}(\lf
s\rf))-b(Y^{n,\3}_{y_n}(\lf
s\rf))\>_H\\
&\quad+\|\si^1(Y^{n,\3}_{x_n}(\lf s\rf))-\si^1(Y^{n,\3}_{y_n}(\lf s\rf))\|_{HS}^2\\
&\quad+2\<Z^{n,\3}_{x_n,y_n}(s)-Z^{n,\3}_{x_n,y_n}(\lf
s\rf),b(Y^{n,\3}_{x_n}(\lf s\rf))-b(Y^{n,\3}_{y_n}(\lf
s\rf))\>_H\\
&\quad+2\<Z^{n,\3}_{x_n,y_n}(s),(e^{(s-\lf
s\rf)A}-\mbox{id}_H)(b_n(Y^{n,\3}_{x_n}(\lf
s\rf))-b_n(Y^{n,\3}_{y_n}(\lf s\rf)))\>_H\}\d s\\
&\le\|x-y\|_H^2-(2\aa+\gg-\nu)\E\int_0^t\e^{\nu
s}\|Z^{n,\3}_{x_n,y_n}(\lf s\rf)\|_H^2\d s\\
&\quad+\E\int_0^t\e^{\nu s}\{-2(2\aa-\nu)\<Z^{n,\3}_{x_n,y_n}(\lf s\rf),Z^{n,\3}_{x_n,y_n}(s)-Z^{n,\3}_{x_n,y_n}(\lf s\rf)\>_H\\
&\quad-(2\aa-\nu)\|Z^{n,\3}_{x_n,y_n}(s)-Z^{n,\3}_{x_n,y_n}(\lf s\rf)\|_H^2\\
 &\quad+2\<Z^{n,\3}_{x_n,y_n}(s)-Z^{n,\3}_{x_n,y_n}(\lf s\rf),b(Y^{n,\3}_{x_n}(\lf
s\rf))-b(Y^{n,\3}_{y_n}(\lf
s\rf))\>_H\}\d s\\
&\quad+2\E\int_0^te^{\nu s}\<Z^{n,\3}_{x_n,y_n}(s),(e^{(s-\lf
s\rf)A}-\mbox{id}_H)(b_n(Y^{n,\3}_{x_n}(\lf s\rf))-b_n(Y^{n,\3}_{y_n}(\lf s\rf)))\>_H\d s\\
&=:\bar{J}_1(t)+\bar{J}_2(t)+\bar{J}_3(t).
\end{split}
\end{equation*}
where we have also used the \eqref{eq24} with
$\tilde{Y}^{n,\3}_{x_n}(t)$ replaced by $Z^{n,\3}_{x_n,y_n}(t)$. By
(H3) and \eqref{eq31}, one has
\begin{equation*}
\begin{split}
\bar{J}_2(t)&\le\E\int_0^t\e^{\nu
s}\{\3^{\ff{1}{2}}\|Z^{n,\3}_{x_n,y_n}(\lf
s\rf)\|_H^2+\3^{\ff{1}{2}}\|b(Y^{n,\3}_{x_n}(\lf
s\rf))-b(Y^{n,\3}_{y_n}(\lf
s\rf))\|_H^2\\
&\quad+\{|2\aa-\nu|+(|2\aa-\nu|+1)\3^{-\ff{1}{2}}\}\|Z^{n,\3}_{x_n,y_n}(s)-Z^{n,\3}_{x_n,y_n}(\lf s\rf)\|_H^2\}\d s\\
&\le\{6(\ll_n^2+\bar{L})(|2\aa-\nu|+1)+1+\bar{L}\}\3^{\ff{1}{2}}\E\int_0^t\e^{\nu
s}\|Z^{n,\3}_{x_n,y_n}(\lf s\rf)\|_H^2\d s.
\end{split}
\end{equation*}
On the other hand, carrying out a similar argument to that of
\eqref{eq25} leads to
\begin{equation*}
\begin{split}
\bar{J}_3(t) &\le2\E\int_0^t\e^{\nu
s}\{\3^{\ff{1}{2}}\|Z^{n,\3}_{x_n,y_n}(s)-Z^{n,\3}_{x_n,y_n}(\lf s\rf)\|_H^2+\3^{\ff{1}{2}}\<Z^{n,\3}_{x_n,y_n}(s)-Z^{x,y}(\lf s\rf),Z^{n,\3}_{x_n,y_n}(\lf s\rf)\>_H\\
&\quad+\3^{\ff{1}{2}}\|Z^{n,\3}_{x_n,y_n}(\lf
s\rf)\|_H^2+\3^{-\ff{1}{2}}\|(\e^{(s-\lf
s\rf)A}-\mbox{id}_H)(b_n(Y^{n,\3}_{x_n}(\lf
s\rf))-b_n(Y^{n,\3}_{y_n}(\lf s\rf)))\|_H^2\d s\\
&\le(2+\ll_n^2\bar{L}+6\ll_n^2+6\bar{L})\3^{\ff{1}{2}}\E\int_0^t\e^{\nu
s}\|Z^{n,\3}_{x_n,y_n}(\lf s\rf)\|_H^2\d s.
\end{split}
\end{equation*}
Hence we arrive at
\begin{equation*}
\begin{split}
\E(\e^{\nu
t}\|Z^{n,\3}_{x_n,y_n}(t)\|_H^2)\le\|x-y\|_H^2-\ff{2\aa+\gg-2\rr_2\3^{\ff{1}{2}}}{2}\E\int_0^t\e^{\nu
s}\|Z^{n,\3}_{x_n,y_n}(\lf s\rf)\|_H^2\d s,
\end{split}
\end{equation*}
and then the desired assertion \eqref{eq33} follows by
$\3\le\min\{1,(2\aa+\gg)^2/(4\rr_2^2)\}$.
\end{proof}

\section{Weak Limit Distribution}
In the previous section, we give some sufficient conditions such
that \eqref{eq7} has a unique stationary distribution
$\pi^{n,\3}\in\mathcal {P}(H_n)$ for a fixed $n$ and a sufficiently
small stepsize $\3\in(0,1)$. In this section we proceed to discuss
the weak limit behavior of $\pi^{n,\3}\in\mathcal {P}(H_n)$ and give
positive answers to the following questions:

\begin{itemize}
\item Will the stationary distribution
$\pi^{n,\3}(\cdot)$ converge weakly to some probability measure in
$\mathcal {P}(H)$ whenever $n\to\8$ and $\3\to0$ ?
\item If yes, what is the weak limit probability measure ?
\end{itemize}

Denote  $\{X_x(t)\}_{t\geq0,x\in H}$ by the mild solution of
\eqref{eq1} starting from the point $x$ at time $t=0$, which is a
homogenous Markov process. For any subset
$\Gamma\subset\mathscr{B}(H)$ and arbitrary $t\ge0$,  let
$\mathbb{P}_t(x,\Gamma):=\mathbb{P}(X_x(t)\in\Gamma). $

\begin{defn}
$\{X_x(t)\}_{t\ge0,x\in H}$ is said to have a stationary
distribution $\pi(\cdot)\in\mathcal {P}(H)$ if $
\lim\limits_{t\rightarrow\infty}\d_{\mathbb{L}}(\mathbb{P}_t(x,\cdot),\pi(\cdot))=0.
$
\end{defn}

To reveal the limit behavior of $\pi^{n,\3}(\cdot)$, we first give
several auxiliary lemmas.

\begin{lem}\label{analytic}
{\rm Let (H1)-(H4) hold and assume further that $2\aa+\gg>0$. Then
the mild solution $\{X_x(t)\}_{t\ge0,x\in H}$ of \eqref{eq1} has a
unique stationary distribution $\pi(\cdot)\in\mathcal {P}(H)$. }
\end{lem}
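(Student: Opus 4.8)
The plan is to prove existence and uniqueness of the stationary distribution $\pi$ for the mild solution $\{X_x(t)\}$ by establishing two continuous-time analogues of Properties $(\mathbb{P}1)$ and $(\mathbb{P}2)$, and then combining them exactly as in the proof of Theorem~\ref{numerical}. Concretely, I would show (i) a uniform second-moment bound $\sup_{t\ge0}\sup_{x\in U}\E\|X_x(t)\|_H^2<\8$ for bounded $U\subseteq H$, and (ii) exponential contraction of solutions started from different points, $\lim_{t\to\8}\sup_{x,y\in U}\E\|X_x(t)-X_y(t)\|_H^2=0$. Since $H$ itself is not finite-dimensional, the finite-dimensional argument of \cite{ym04} cannot be invoked directly; instead tightness must be supplied by the smoothing of the semigroup, which is exactly where the immediate compactness in (H1) enters.

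First I would derive the moment bound (i). The natural route is to split $X_x(t)=\tilde X_x(t)+Z(t)$, where $Z(t):=\int_0^t\e^{(t-s)A}\si^0\,\d W(s)$ is the stochastic convolution of the constant part of the noise and $\tilde X_x$ solves the equation with the $A$-regularized drift/diffusion, mirroring the decomposition $\tilde Y^{n,\3}=Y^{n,\3}-Z^{n,\3}$ used in Theorem~\ref{boundedness}. By (H2) and the It\^o isometry one bounds $\E\|Z(t)\|_H^2\le\|(-A)^{-\theta_1}\|^2\dd_1$ uniformly in $t$, exactly as in \eqref{eq22}. For $\tilde X_x$ I would apply the It\^o formula to $\e^{\nu t}\|\tilde X_x(t)\|_H^2$ with a suitable $\nu>0$; the self-adjointness and spectral bound in (H1) give $2\<\tilde X_x,A\tilde X_x\>_H\le-2\aa\|\tilde X_x\|_H^2$, and the dissipativity estimate \eqref{eq29} coming from (H4) controls the drift and diffusion terms. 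The condition $2\aa+\gg>0$ is precisely what makes the resulting exponent negative, yielding the uniform bound after choosing $\ee,\nu$ small; this is the continuous-time, $\3\to0$ limit of the computation already performed for $J_1$--$J_4$, and is in fact simpler because there are no increment terms $\tilde Y(s)-\tilde Y(\lf s\rf)$ to estimate.

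Next I would establish the contraction (ii). Writing $Z_{x,y}(t):=X_x(t)-X_y(t)$, the constant noise term $\si^0$ cancels in the difference, so $Z_{x,y}$ satisfies a mild equation driven only by the differences $b(X_x)-b(X_y)$ and $\si^1(X_x)-\si^1(X_y)$. Applying the It\^o formula to $\e^{\nu t}\|Z_{x,y}(t)\|_H^2$ with $\nu:=(2\aa+\gg)/2$ and using (H1) together with the one-sided Lipschitz/monotonicity hypothesis (H4), namely $2\<Z_{x,y},b(X_x)-b(X_y)\>_H+\|\si^1(X_x)-\si^1(X_y)\|_{HS}^2\le-\gg\|Z_{x,y}\|_H^2$, gives $\E(\e^{\nu t}\|Z_{x,y}(t)\|_H^2)\le\|x-y\|_H^2$, whence $\E\|Z_{x,y}(t)\|_H^2\le\e^{-(2\aa+\gg)t/2}\|x-y\|_H^2\to0$ uniformly over bounded $U$. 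This is the clean continuous-time counterpart of Theorem~\ref{stability}, with no stepsize error, so the estimate is actually exact rather than perturbative.

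Finally I would assemble the conclusion. From (i)--(ii), following the metric argument for $\d_\mathbb{L}$ used in Theorem~\ref{numerical} and in \cite[Lemmas 2.4, 2.6]{ym04}, the family $\{\mathbb{P}_t(x,\cdot)\}$ is Cauchy in the metric $\d_\mathbb{L}$ uniformly in the starting point, so there is $\pi\in\mathcal{P}(H)$ with $\lim_{t\to\8}\d_\mathbb{L}(\mathbb{P}_t(x,\cdot),\pi)=0$ for every $x$; the contraction (ii) gives uniqueness. The main obstacle I anticipate is not the energy estimates but the tightness needed to pass from a uniformly Cauchy sequence of measures to an actual limit measure on the infinite-dimensional space $H$: in $\R^n$ the bounded-set argument of \cite{ym04} suffices, but on $H$ one must use the immediate compactness of $\{\e^{tA}\}$ from (H1)---together with the uniform moment bound in the fractional-Sobolev norm $\|(-A)^{\theta_1}\cdot\|_H$ implicit in (H2)---to confine the laws to a compact subset of $H$ and thereby guarantee that the $\d_\mathbb{L}$-limit is genuinely a probability measure rather than merely a finitely additive set function. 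Establishing this compact containment is the step I would spend the most care on.
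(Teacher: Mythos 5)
Your proposal matches the paper's proof essentially step for step: the authors likewise set $\bar Z(t)=\int_0^t \e^{(t-s)A}\si^0\,\d W(s)$ and $\bar X_x=X_x-\bar Z$, apply the It\^o formula to $\|\bar X_x(t)\|_H^2$ and rerun the energy estimates of Theorem \ref{boundedness} and Theorem \ref{stability} (simplified, since the discretization error terms disappear) to obtain the uniform moment bound and the contraction, and then conclude by the metric argument of \cite[Theorem 3.1]{BHY}. The only point where you diverge is the final tightness worry, which is in fact unnecessary: since $H$ is Polish, $(\mathcal{P}(H),\d_{\mathbb{L}})$ is a complete metric space, so the $\d_{\mathbb{L}}$-Cauchy family of transition laws converges to a genuine probability measure without invoking the immediate compactness of the semigroup.
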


\begin{proof}
We remark that \cite[Theorem 3.1]{BHY} investigates the stationary
distribution of \eqref{eq1} with $\si^0=0$, i.e., the diffusion
coefficient there is a Hilbert-Schmidt operator. For $\si^0\neq0$,
note that $\si$ is not Hilbert-Schmidt. Therefore \cite[Theorem
3.1]{BHY} is unavailable for \eqref{eq1}. Let
\begin{equation}\label{a1}
\bar{Z}(t):=\int_0^te^{(t-s)A}\si^0\d W(s)\mbox{ and }
\bar{X}_x(t):=X_x(t)-\bar{Z}(t).
\end{equation}
Then \eqref{eq1} can be rewritten in the form
\begin{equation}\label{a6}
\d \bar{X}_x(t)=\{A\bar{X}_x(t)+b(X_x(t))\}\d t+\si^1(X_x(t))\d
W(t).
\end{equation}
To be precise, \eqref{a6} is first meant in the mild sense. But
under (H1)-(H3) it also has a unique variation solution, and
therefore the It\^o formula applies to $\|\bar{X}_x(t)\|_H^2$.
Carrying out similar arguments to those of Theorem \ref{boundedness}
and Theorem \ref{stability} respectively, for some  bounded subset
$U\subseteq H$ we deduce that
\begin{equation}\label{eq2}
\sup_{t\ge0}\sup_{x\in U}\E\|X_x(t)\|^2_H<\8
\end{equation}
and
\begin{equation*}
\lim\limits_{t\to\8}\sup_{x,y\in U}\E\|X_{x}(t)-X_{y}(t)\|_H^2=0.
\end{equation*}
Then, following the argument of that of \cite[Theorem 3.1]{BHY}
yields the desired assertion.
\end{proof}

\begin{lem}\label{num}
{\rm Let (H1) and (H2) hold and assume further that there exists
$\dd_2>0$ and $\theta_2\in(0,1)$ such that
\begin{equation}\label{eq38}
\int_0^\3\|e^{sA}\si^0\|_{HS}^2\d s\le \dd_2 \3^{\theta_2}.
\end{equation}
Then
\begin{equation}\label{e5}
\sup_{t\ge0}\mathbb{E}\|\bar{Z}(t)-\bar{Z}(\lfloor
t\rfloor)\|_H^2\le C\3^{\theta_1\wedge\theta_2},
\end{equation}
where $C>0$ is a constant independent of $\3$. }
\end{lem}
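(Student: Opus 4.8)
The plan is to decompose the increment $\bar{Z}(t)-\bar{Z}(\lf t\rf)$ into a part coming from the history up to the last gridpoint and a part coming from the last substep, and then to estimate each by It\^o's isometry. Writing $r:=\lf t\rf$, so that $0\le t-r<\3$, I would use the semigroup identity $\e^{(t-s)A}=\e^{(t-r)A}\e^{(r-s)A}$ on $[0,r]$ to obtain
\begin{equation*}
\bar{Z}(t)-\bar{Z}(r)=\int_0^r(\e^{(t-r)A}-\mbox{id}_H)\e^{(r-s)A}\si^0\,\d W(s)+\int_r^t\e^{(t-s)A}\si^0\,\d W(s).
\end{equation*}
This is a single stochastic integral whose integrand equals $(\e^{(t-r)A}-\mbox{id}_H)\e^{(r-s)A}\si^0$ on $[0,r]$ and $\e^{(t-s)A}\si^0$ on $(r,t]$, so It\^o's isometry gives
\begin{equation*}
\E\|\bar{Z}(t)-\bar{Z}(r)\|_H^2=\int_0^r\|(\e^{(t-r)A}-\mbox{id}_H)\e^{(r-s)A}\si^0\|_{HS}^2\d s+\int_r^t\|\e^{(t-s)A}\si^0\|_{HS}^2\d s=:T_1+T_2.
\end{equation*}

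For $T_2$, the change of variable $u=t-s$ together with $t-r<\3$ and the hypothesis \eqref{eq38} give at once $T_2=\int_0^{t-r}\|\e^{uA}\si^0\|_{HS}^2\d u\le\dd_2\3^{\theta_2}$. For $T_1$, the point is to absorb the defect $\e^{(t-r)A}-\mbox{id}_H$ into a fractional power of $-A$: inserting $(-A)^{-\theta_1}(-A)^{\theta_1}$ and using submultiplicativity, I would bound
\begin{equation*}
T_1\le\|(\e^{(t-r)A}-\mbox{id}_H)(-A)^{-\theta_1}\|^2\int_0^r\|(-A)^{\theta_1}\e^{(r-s)A}\si^0\|_{HS}^2\d s.
\end{equation*}
After the change of variable $u=r-s$ the remaining integral is at most $\dd_1$ by (H2). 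The operator norm is controlled spectrally: since $-A$ has eigenvalues $\ll_k>0$ and $1-\e^{-x}\le x^{\theta_1}$ for $x>0$ (because $1-\e^{-x}\le 1$ and $1-\e^{-x}\le x$ force $1-\e^{-x}=(1-\e^{-x})^{1-\theta_1}(1-\e^{-x})^{\theta_1}\le x^{\theta_1}$), one has $(1-\e^{-\ll_k(t-r)})\ll_k^{-\theta_1}\le(t-r)^{\theta_1}$, whence $\|(\e^{(t-r)A}-\mbox{id}_H)(-A)^{-\theta_1}\|\le(t-r)^{\theta_1}\le\3^{\theta_1}$. This yields $T_1\le\dd_1\3^{2\theta_1}\le\dd_1\3^{\theta_1}$, the last step using $\3<1$.

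Combining the two estimates and using $\3<1$ once more gives, uniformly in $t\ge0$,
\begin{equation*}
\E\|\bar{Z}(t)-\bar{Z}(\lf t\rf)\|_H^2\le\dd_1\3^{\theta_1}+\dd_2\3^{\theta_2}\le(\dd_1+\dd_2)\3^{\theta_1\wedge\theta_2},
\end{equation*}
so that \eqref{e5} follows with $C=\dd_1+\dd_2$, a constant independent of $\3$. The only nontrivial ingredient is the fractional smoothing estimate $\|(\e^{\tau A}-\mbox{id}_H)(-A)^{-\theta_1}\|\le\tau^{\theta_1}$, which trades the non-Lipschitz behaviour of $\tau\mapsto\e^{\tau A}$ at $\tau=0$ for a gain $\tau^{\theta_1}$, exactly matched by the integrability of $(-A)^{\theta_1}\e^{\cdot A}\si^0$ built into (H2); everything else is It\^o's isometry and changes of variable, and the uniformity in $t$ is automatic because the constants $\dd_1,\dd_2$ in (H2) and \eqref{eq38} do not depend on $t$.
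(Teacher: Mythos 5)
Your proof is correct and follows essentially the same route as the paper: the identical It\^o-isometry decomposition into the contribution over $[0,\lf t\rf]$ (handled by inserting $(-A)^{-\theta_1}(-A)^{\theta_1}$ and invoking (H2)) and the last-substep contribution (handled directly by \eqref{eq38}). The only difference is cosmetic: you derive the smoothing bound $\|(\e^{\tau A}-\mbox{id}_H)(-A)^{-\theta_1}\|\le\tau^{\theta_1}$ by an elementary spectral interpolation, legitimate here since $A$ is self-adjoint with discrete spectrum by (H1), whereas the paper cites the corresponding semigroup estimate from Pazy with a constant $C_1$.
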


\begin{proof}
 Recall from \cite[Theorem 6.13,
p.74]{Pz83} that there exists $C_1>0$ such that
\begin{equation}\label{a9}
\|(-A)^{\aa_1}e^{t A}\|\le C_1t^{-\aa_1},\ \ \
\|(-A)^{-\aa_2}(1-e^{tA})\|\leq C_1t^{\aa_2},
\end{equation}
for arbitrary $\aa_1\ge0,\ \aa_2\in[0,1]$, and that
\begin{equation}\label{a10}
(-A)^{\aa_3+\aa_4}x=(-A)^{\aa_3}(-A)^{\aa_4}x,\ \ \ \ x\in\mathcal
{D}((-A)^{\gamma}),
\end{equation}
for any $\aa_3,\aa_4\in\R$, where
$\gamma:=\max\{\aa_3,\aa_4,\aa_3+\aa_4\}$. In the light of the
independent increment of Wiener process and the It\^o's isometry,
 \begin{equation*}
\begin{split}
\E\|\bar{Z}(t)-\bar{Z}(\lf t\rf)\|_H^2
&=\int_0^{\lf t\rf}\|(\e^{(t-\lf t\rf)A}-\mbox{id}_H)\e^{(\lf t\rf-
s)A}\si^0\|_{HS}^2\d s+\int_{\lf t\rf}^t\|\e^{(t-
s)A}\si^0\|_{HS}^2\d s.
\end{split}
\end{equation*}
This, combining (H2), \eqref{eq38},  \eqref{a9} with \eqref{a10},
yields that
 \begin{equation*}
\begin{split}
\E\|\bar{Z}(t)-\bar{Z}(\lf t\rf)\|_H^2&\le\int_0^{\lf
t\rf}\|(-A)^{-\theta_1}(\e^{(t-\lf
t\rf)A}-\mbox{id}_H)\|^2\cdot\|(-A)^{\theta_1}\e^{(\lf
t\rf-s)A}\si^0\|_{HS}^2\d
s\\
&\quad+\int_0^\3\|\e^{sA}\si^0\|_{HS}^2\d s\\
&\le C_1^2\3^{2\theta_1}\int_0^{\lf
t\rf}\|(-A)^{\theta_1}\e^{sA}\si^0\|_{HS}^2\d s+2\dd_2\3^{\theta_2}\\
&\le (C_1^2\dd_1+\dd_2) \3^{\theta_1\wedge\theta_2},
\end{split}
\end{equation*}
and therefore the desired assertion follows.
\end{proof}

\begin{rem}
{\rm Let $\si^0=\mbox{id}_H$ and $A$ be the Laplace operator defined
in Remark \ref{Laplace}. A straightforward computation shows that
\begin{equation}\label{e7}
\int_0^\3\|e^{sA}\|_{HS}^2\d
s=\ff{1}{2}\sum_{k=1}^\8\ff{1}{k^2}(1-e^{-2k^2\3}).
\end{equation}
Recall that for arbitrary $\dd\in(0,1)$ and $x,y\ge0$
\begin{equation}\label{e8}
|e^{-x}-e^{-y}|\le |x-y|^\dd.
\end{equation}
It then follows from \eqref{e7} and \eqref{e8} that
\begin{equation*}
\int_0^\3\|e^{sA}\|_{HS}^2\d s\le 2^{\dd-1}
\3^\dd\sum_{k=1}^\8\ff{1}{k^{2(1-\dd)}}.
\end{equation*}
Hence, \eqref{eq38}  holds with $\dd_2=2^{\dd-1}
\sum_{k=1}^\8\ff{1}{k^{2(1-\dd)}}$ and $\theta_2=\dd\in(0,1/2)$.

}

\end{rem}

\begin{lem}\label{numer}
{\rm Let the assumptions of Lemma \ref{analytic} hold and
\begin{equation}\label{e6}
\tau:=\aa^{-1}L_1+(2\aa)^{-1/2}L_2\in(0,1).
\end{equation}
Then
\begin{equation}\label{eq43}
\sup_{t\ge0}\E\|X_x(t)-Y^{n,\3}_{x_n}(t)\|_H^2\le
C\{\ll_n^{-(\theta_1\wedge1/2)}+\triangle^{\theta_1\wedge\theta_2}\},
\end{equation}
where $C>0$ is a constant dependent on $x\in H$ but independent of
$n$ and $\3$. }
\end{lem}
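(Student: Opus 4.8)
The plan is to compare the variation-of-constants (mild) representations of $X_x(t)$ and $Y^{n,\3}_{x_n}(t)$ directly, and to close the resulting estimate by means of the contraction hypothesis $\tau=\aa^{-1}L_1+(2\aa)^{-1/2}L_2\in(0,1)$, uniformly in $t\ge0$. Set
\[
M:=\sup_{t\ge0}\big(\E\|X_x(t)-Y^{n,\3}_{x_n}(t)\|_H^2\big)^{1/2}.
\]
First I would record the two a priori moment bounds that make $M$ finite and, crucially, keep every later constant free of $n$: the bound $\sup_{t\ge0}\E\|X_x(t)\|_H^2<\8$ from \eqref{eq2}, and an $n$-uniform analogue $\sup_{n}\sup_{t\ge0}\E\|Y^{n,\3}_{x_n}(t)\|_H^2<\8$. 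The latter I would re-derive from the contraction $\tau<1$ rather than from Theorem \ref{boundedness}, whose constant $\rr_1$ depends on $\ll_n$; together with \eqref{eq16} these control $\E\|b(Y^{n,\3}_{x_n})\|_H^2$ and $\E\|\si^1(Y^{n,\3}_{x_n})\|_{HS}^2$ uniformly in $n$. Subtracting the two mild formulas, I would split the drift and diffusion integrands, and the initial term $\e^{tA}(x-x_n)$, into three groups: contraction pieces (differences of $b$, resp.\ $\si^1$, evaluated at $X$ and at $Y^{n,\3}_{x_n}$), temporal pieces (the step jumps $X(s)$ versus $X(\lf s\rf)$, the semigroup mismatch $\e^{(t-s)A}$ versus $\e^{(t-\lf s\rf)A}$, and the stochastic-convolution difference carrying $\si^0$), and spatial pieces (the projection errors $(\mbox{id}_H-\pi_n)$ acting on $b$, $\si^1$, $\si^0$ and on $x$).

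For the contraction pieces I would use $\|\e^{tA}\|\le\e^{-\aa t}$: the drift piece is dominated in $L^2$, via Minkowski's integral inequality, by $L_1\int_0^t\e^{-\aa(t-s)}\,\d s\cdot M\le\aa^{-1}L_1 M$, while the diffusion piece, by the It\^o isometry, is dominated by $\big(L_2^2\int_0^t\e^{-2\aa(t-s)}\,\d s\big)^{1/2}M\le(2\aa)^{-1/2}L_2 M$. Their sum is exactly $\tau M$, which is the term that will be absorbed at the end.

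The core is to bound the temporal and spatial pieces by a constant, free of $n$ and $\3$, times $\ll_n^{-(\theta_1\wedge1/2)/2}+\3^{(\theta_1\wedge\theta_2)/2}$. For the temporal pieces I would deliberately re-centre the comparison on the true solution, writing $b(X(s))-b_n(Y^{n,\3}_{x_n}(\lf s\rf))$ through $b(X(\lf s\rf))$ and $b(Y^{n,\3}_{x_n}(\lf s\rf))$ (and likewise for $\si^1$); this replaces the step regularity of $Y^{n,\3}_{x_n}$, whose estimate \eqref{eq20} carries the $n$-dependent $\bb_1$, by that of $X$, for which $\E\|X(s)-X(\lf s\rf)\|_H^2\le C\3^{\theta_1\wedge\theta_2}$ holds with $C$ free of $n$ (argued exactly as in Lemma \ref{num}). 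The $\si^0$-convolution difference is handled by \eqref{e5}, and the semigroup mismatch by writing $\e^{(t-s)A}-\e^{(t-\lf s\rf)A}=\e^{(t-s)A}(\mbox{id}_H-\e^{(s-\lf s\rf)A})$ and pairing the bound $\|(-A)^{-\theta_1}(\mbox{id}_H-\e^{(s-\lf s\rf)A})\|\le C\3^{\theta_1}$ from \eqref{a9} against the integrable singularity $\|(-A)^{\theta_1}\e^{(t-s)A}\|\le C(t-s)^{-\theta_1}$ with the built-in exponential decay of the semigroup. For the spatial pieces I would insert $(-A)^{\pm\theta}$ and use $\|(-A)^{-\theta}(\mbox{id}_H-\pi_n)\|=\ll_{n+1}^{-\theta}\le\ll_n^{-\theta}$, invoking (H2) with $\theta=\theta_1$ for the $\si^0$ part and allowing $\theta$ up to $1/2$ for the remaining Hilbert--Schmidt and initial-datum errors; the projection error of the initial datum, $\|(\mbox{id}_H-\pi_n)x\|\le\ll_n^{-(\theta_1\wedge1/2)/2}\|(-A)^{(\theta_1\wedge1/2)/2}x\|$, is what makes the constant $C$ depend on $x$.

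Collecting the three groups yields $M\le\tau M+C\{\ll_n^{-(\theta_1\wedge1/2)/2}+\3^{(\theta_1\wedge\theta_2)/2}\}$; since $\tau<1$ this rearranges to $M\le(1-\tau)^{-1}C\{\cdots\}$, and squaring gives \eqref{eq43}. The main obstacle, and the reason the stronger hypothesis $\tau<1$ is imposed here in place of the mere $2\aa+\gg>0$ used earlier, is keeping all constants independent of $n$: the naive finite-dimensional estimates on $H_n$ (and \eqref{eq20}, Theorem \ref{boundedness}) carry powers of $\|A_n\|\le\ll_n$, which would destroy the uniformity. The two devices that avoid this are re-centring the temporal comparison on $X$ rather than on $Y^{n,\3}_{x_n}$, and routing every semigroup-difference and projection error through the smoothing bound $\|(-A)^{\theta}\e^{rA}\|\le Cr^{-\theta}$ (with the decay $\|\e^{rA}\|\le\e^{-\aa r}$) rather than through the unbounded operator $(-A)^{\theta}\pi_n$; the $n$-free contraction constant $\tau$ then lets the estimate close uniformly in both $n$ and $\3$.
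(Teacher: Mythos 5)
Your proposal is correct and follows essentially the same route as the paper's proof: comparing the mild formulas via Minkowski's integral inequality and the It\^o isometry, splitting the error into projection, temporal and contraction pieces, re-centring the temporal regularity on the true solution $X$ (so that no $n$-dependent constant from \eqref{eq20} enters), routing the semigroup mismatches through the smoothing bounds \eqref{a9}--\eqref{a10}, and absorbing the term $\tau M$ using $\tau<1$. The only noticeable (and welcome) refinement is that you make explicit the need for an $n$-uniform second-moment bound on $Y^{n,\3}_{x_n}$ before invoking \eqref{eq16} in the $G_8$--$G_9$-type terms, a point the paper passes over with a generic constant.
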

\begin{proof}
By \eqref{eq16} and \eqref{eq2}, it follows that
\begin{equation}\label{f3}
\sup_{t\ge0}\E \|b(X_x(t))\|_H^2+
\sup_{t\ge0}\E\|\sigma^1(X_x(t))\|_{HS}^2\le C.
\end{equation}
Note that $(\E\|\cdot\|_H^2)^{1/2}$ is a norm and recall from
\cite[Theorem 202]{HLP} the Minkowski integral inequality:
\begin{equation*}
\Big(\E\Big|\int_0^tF(s)\d
s\Big|^2\Big)^{1/2}\le\int_0^t\Big(\E|F(s)|^2\Big)^{1/2}\d s,\ \ \
t\ge0,
\end{equation*}
where $F : [0,\8)\times \OO\to R$ is measurable and locally
integrable. Then, applying the It\^o isometry and using (H1), we
obtain from \eqref{eq35} that
\begin{equation}\label{a0}
\begin{split}
&(\mathbb{E}\|\bar{X}_x(t)-\bar{X}_x(\lfloor
t\rfloor)\|_H^2)^{1/2}\\
&\leq \|\e^{\lfloor t\rfloor A}\{\e^{(t-\lfloor
t\rfloor)A}-1\}x\|_H\\
&\quad+\int_0^{\lfloor t\rfloor}(\mathbb{E}\|\e^{(\lfloor t\rfloor-s)A}\{\e^{(t-\lfloor t\rfloor)A}-\mbox{id}_H\}b(X_x(s))\|_H^2)^{1/2}\d s\\
&\quad+\Big(\int_0^{\lfloor t\rfloor}\mathbb{E}\|\e^{(\lfloor t\rfloor-s)A}\{\e^{(t-\lfloor t\rfloor)A}-\mbox{id}_H\}\sigma^1(X_x(s))\|_{HS}^2\d s\Big)^{1/2}\\
&\quad+\int_{\lfloor
t\rfloor}^t(\mathbb{E}\|b(X_x(s))\|_H^2)^{1/2}\d
s+\Big(\int_{\lfloor t\rfloor}^t\E\|\sigma^1(X_x(s))\|_{HS}^2\d
s\Big)^{1/2}
\\&=:F_1(t)+F_2(t)+F_3(t)+F_4(t)+F_5(t).
\end{split}
\end{equation}
Let $\rr:=(\theta_1\wedge\theta_2)/2$. In view of \eqref{a9},
\eqref{a10}, (H1) and the boundedness of $(-A)^{-(1-\rr/2)}$, one
has
\begin{equation*}
\begin{split}
F_1(t)&=\|(-A)^{-(1-\rr/2)}\e^{\lfloor t\rfloor A}(-A)^{-\rr/2}
\{\e^{(t-\lfloor t\rfloor)A}-\mbox{id}_H\}(-A)x\|_H^2\\
&\leq\|(-A)^{-(1-\rr/2)}\e^{\lfloor t\rfloor
A}\|^2\cdot\|(-A)^{-\rr/2}
\{\e^{(t-\lfloor t\rfloor)A}-\mbox{id}_H\}(-A)x\|_H^2\\
&\leq C\|(-A)^{-(1-\rr/2)}\|^2\cdot\|Ax\|_H^2\triangle^\rr.
\end{split}
\end{equation*}
Also, by \eqref{a9} and \eqref{a10}, we obtain from   \eqref{f3}
that for $\tilde{\theta}\in(0,1)$
\begin{equation}\label{f7}
\begin{split}
&\sum_{k=2}^5F_k(t)\\
&\leq C\3^{1/2}+C\int_0^{\lfloor
t\rfloor}\|(-A)^{\rr}\e^{\tilde{\theta}(\lfloor
t\rfloor-s)A}\|\cdot\|\e^{(1-\tilde{\theta})(\lfloor
t\rfloor-s)A}\|\cdot\|(-A)^{-\rr}\{\e^{(t-\lfloor
t\rfloor)A}-1\}\|\d s\\
&\quad+C\Big(\int_0^{\lfloor
t\rfloor}\|(-A)^{\rr}\e^{\tilde{\theta}(\lfloor
t\rfloor-s)A}\|^2\cdot\|\e^{(1-\tilde{\theta})(\lfloor
t\rfloor-s)A}\|^2\cdot\|(-A)^{-\rr}\{\e^{(t-\lfloor
t\rfloor)A}-1\}\|^2\d
s\Big)^{1/2}\\
&\leq C\3^{1/2}+C\3^{\rr}\int_0^{\lfloor t\rfloor}(\tilde{\theta}
s)^{-\rr}\e^{-\aa(1-\tilde{\theta})s}\d
s+C\3^{\rr}\Big(\int_0^{\lfloor t\rfloor}(\tilde{\theta}
s)^{-2\rr}\e^{-2\aa(1-\tilde{\theta})s}\d s\Big)^{1/2}.\\
\end{split}
\end{equation}
Observe that
\begin{equation*}
\int_0^{\lfloor t\rfloor} s^{-\rr}\e^{-\aa(1-\tilde{\theta})s}\d
s\le(\aa(1-\tilde{\theta}))^{\rr-1}\int_0^\8s^{-\rr}\e^{-s}\d
s=(\aa(1-\tilde{\theta}))^{\rr-1}\Gamma(1-\rr),
\end{equation*}
and similarly
\begin{equation*}
\int_0^{\lfloor t\rfloor} s^{-2\rr}e^{-2\aa(1-\tilde{\theta})s}\d
s\le(2\aa(1-\tilde{\theta}))^{2\rr-1}\Gamma(1-2\rr),
\end{equation*}
where $\GG(\cdot)$ is the Gamma function. Hence
\begin{equation*}
\sum_{k=2}^4F_k(t)\le C\3^{(\theta_1\wedge\theta_2)/2}.
\end{equation*}
This, together with the estimate of $F_1(t)$, gives that
\begin{equation*}
\sup_{t\ge0}\mathbb{E}\|\bar{X}_x(t)-\bar{X}_x(\lfloor
t\rfloor)\|_H^2\le C\3^{\theta_1\wedge\theta_2}.
\end{equation*}
Noting that $\bar{X}_x(t)=X_x(t)-\bar{Z}(t)$ and utilizing
\eqref{e5}, one has
\begin{equation}\label{a4}
\sup_{t\ge0}\mathbb{E}\|X_x(t)-X_x(\lfloor t\rfloor)\|_H^2\le
C\3^{\theta_1\wedge\theta_2}.
\end{equation}
Since
\begin{equation*}
\begin{split}
\|(\mbox{id}_H-\pi_n)(-A)^{-\theta_1}u\|^2_H
&=\Big\|\sum_{k=n+1}^\8\ll_k^{-\theta_1}\<u,e_k\>_He_k\Big\|_H^2\\
&\le\ll_n^{-2\theta_1}\|u\|_H^2,\ \ \ u\in H,
\end{split}
\end{equation*}
we arrive at
\begin{equation}\label{f6}
\|(\mbox{id}_H-\pi_n)(-A)^{-\theta_1}\|^2\le\ll_n^{-2\theta_1}.
\end{equation}
 By virtue of  the It\^o isometry, (H2), \eqref{f6}, \eqref{a9} and
\eqref{a10}, it follows that
\begin{equation}\label{a7}
\begin{split}
\E\|\bar{Z}(t)-Z^{n,\3}(t)\|_H^2&\le2\int_0^t\|\e^{sA}(\mbox{id}_H-\pi_n)\si^0\|_{HS}^2\d
s\\
&\quad+2\int_0^t\|(-A)^{-\theta_1}(\mbox{id}_H-\e^{(s-\lf s\rf)}A)(-A)^{\theta_1}\e^{(t-s)A}\si^0_n\|_{HS}^2\d s\\
&\le
2\|(\mbox{id}_H-\pi_n)(-A)^{-\theta_1}\|^2\int_0^t\|(-A)^{\theta_1}\e^{sA}\si^0\|_{HS}^2\d
s\\
&\quad+ C\3^{2\theta_1}\int_0^t\|(-A)^{\theta_1}\e^{sA}\si^0_n\|_{HS}^2\d s\\
&\le
C(\|(\mbox{id}_H-\pi_n)(-A)^{-\theta_1}\|^2+\3^{2\theta_1})\int_0^t\|(-A)^{\theta_1}\e^{sA}\si^0\|_{HS}^2\d
s\\
&\le C(\ll_n^{-2\theta_1}+\3^{2\theta_1}).
\end{split}
\end{equation}
Following the argument of \eqref{a0}, we have
\begin{equation}\label{a5}
\begin{split}
&(\E\|\bar{X}_x(t)-\tt{Y}^{n,\3}_{x_n}(t)\|_H^2)^{1/2}\\&\le \|e^{tA}(\mbox{id}_H-\pi_n)x\|_H\\
&\quad+\int_0^t\|\e^{(t-s)A}(\mbox{id}_H-\pi_n)\|(\E\|b(X_x(s))\|_H^2)^{1/2}\d s\\
&\quad+\Big(\int_0^t\|\e^{(t-s)A}(\mbox{id}_H-\pi_n)\|^2\E\|\si^1(X_x(s))\|_{HS}^2\d s\Big)^{1/2}\\
&\quad+\int_0^t\|\e^{(t-s)A}\|(\E\|b_n(X_x(s))-b_n(X_x(\lf s\rf))\|_H^2)^{1/2}\d s\\
&\quad+\Big(\int_0^t\|\e^{(t-s)A}\|^2\E\|\si_n^1(X_x(s))-\si_n^1(X_x(\lf
s\rf))\|_{HS}^2\d s\Big)^{1/2}\\
&\quad+\int_0^t\|\e^{(t-s)A}\|(\E\|b_n(X_x(\lf
s\rf))-b_n(Y^{n,\3}_{x_n}(\lf
s\rf))\|_H^2)^{1/2}\d s\\
&\quad+\Big(\int_0^t\|\e^{(t-s)A}\|^2\E\|\si_n^1(X_x(\lf
s\rf))-\si_n^1(Y^{n,\3}_{x_n}(\lf
s\rf))\|_{HS}^2\d s\Big)^{1/2}\\
&\quad+\int_0^t\|\e^{(t-s)A}\{\mbox{id}_H-\e^{(s-\lf s\rf)
A}\}\|(\E\|b(Y^{n,\3}_{x_n}(\lf
s\rf))\|^2_H)^{1/2}\d s\\
&\quad+\Big(\int_0^t\|\e^{(t-s)A}\{\mbox{id}_H-\e^{(s-\lf s\rf)
A}\}\|^2\E\|\si^1(Y^{n,\3}_{x_n}(\lf
s\rf))\|^2_{HS}\d s\Big)^{1/2}\\
 &=:\sum_{i=1}^9G_i(t).
\end{split}
\end{equation}
A straightforward computation shows that
\begin{equation*}
\|\e^{tA}(\mbox{id}_H-\pi_n)u\|_H^2=\sum_{i=n+1}^\8\e^{-2\ll_it}\<u,e_i\>_H^2,\
\ \ u\in H.
\end{equation*}
This further gives that
\begin{equation}\label{f2}
\|\e^{tA}(\mbox{id}_H-\pi_n)\|^2\le \e^{-2\ll_nt}
\end{equation}
and that
\begin{equation}\label{f8}
G_1(t)\le\Big(\sum_{i=n+1}^\8\ff{\e^{-2\ll_it}}{\ll_i^2}\ll_i^2\<x,e_i\>_H^2\Big)^{1/2}\le
\ll_n^{-1}\|Ax\|_H
\end{equation}
by recalling that $\{\ll_i\}_{i\ge1}$ is a nondecreasing sequence.
By \eqref{f3} and \eqref{f2}, one has
\begin{equation}\label{e1}
\begin{split}
G_2(t)+G_3(t)&\le C\int_0^t\|\e^{(t-s)A}(\mbox{id}_H-\pi_n)\|\d
s+C\Big(\int_0^t\|\e^{(t-s)A}(\mbox{id}_H-\pi_n)\|^2\d s\Big)^{1/2}\\
&\le C\int_0^t\e^{-\ll_n(t-s)}\d s+C\Big(\int_0^t\e^{-2\ll_n(t-s)}\d
s\Big)^{1/2}\le C(\ll_n^{-1}+\ll_n^{-1/2}).
\end{split}
\end{equation}
Taking (H1), (H3) and \eqref{a4} into account gives that
\begin{equation}\label{e2}
\begin{split}
G_4(t)+G_5(t)&\le
C\3^{(\theta_1\wedge\theta_2)/2}\Big\{\int_0^t\|\e^{(t-s)A}\|\d
s+\Big(\int_0^t\|\e^{(t-s)A}\|^2\d s\Big)^{1/2}\Big\}\\
&\le C\3^{(\theta_1\wedge\theta_2)/2}.
\end{split}
\end{equation}
Next, note from (H1) and (H3) that
\begin{equation}\label{e3}
\begin{split}
&G_6(t)+G_7(t)\\
&\le\sup_{0\le s\le t}(\E\|b(X_x(\lf s\rf))-b(Y^{n,\3}_{x_n}(\lf
s\rf))\|_H^2)^{1/2}\int_0^t\|\e^{(t-s)A}\|\d s\\
&\quad+\sup_{0\le s\le t}(\E\|\si^1(X_x(\lf
s\rf))-\si^1(Y^{n,\3}_{x_n}(\lf
s\rf))\|_H^2)^{1/2}\Big(\int_0^t\|\e^{(t-s)A}\|^2\d s\Big)^{1/2}\\
&\le\aa^{-1}\sup_{0\le s\le t}(\E\|b(X_x(\lf
s\rf))-b(Y^{n,\3}_{x_n}(\lf
s\rf))\|_H^2)^{1/2}\\
&\quad+(2\aa)^{-1/2}\sup_{0\le s\le t}(\E\|\si^1(X_x(\lf
s\rf))-\si^1(Y^{n,\3}_{x_n}(\lf
s\rf))\|_H^2)^{1/2}\\
&\le\tau\sup_{0\le s\le t}(\E\|X_x(s)-Y^{n,\3}_{x_n}(
s)\|_H^2)^{1/2}\\
&\le\tau\sup_{0\le s\le
t}(\E\|\bar{X}_x(s)-\tt{Y}^{n,\3}_{x_n}(s)\|_H^2)^{1/2}+\tau\sup_{0\le
s\le t}(\E\|\bar{Z}(s)-Z^{n,\3}(s)\|_H^2)^{1/2},
\end{split}
\end{equation}
where $\tau\in(0,1)$ is defined by \eqref{e6}. Following the
argument of \eqref{f7} leads to
\begin{equation}\label{e4}
G_8(t)+G_9(t)\le C\3^{(\theta_1\wedge\theta_2)/2}.
\end{equation}
Substituting \eqref{f8}-\eqref{e4} into \eqref{a5} yields that
\begin{equation*}
\begin{split}
\sup_{t\ge0}(\E\|\bar{X}_x(t)-\tt{Y}^{n,\3}_{x_n}(t)\|_H^2)^{1/2}&\le
C(\ll_n^{-1/2}+\3^{(\theta_1\wedge\theta_2)/2})
\end{split}
\end{equation*}
due to $\tau\in(0,1)$. Consequently the desired assertion follows
from \eqref{a7}.
\end{proof}

\begin{thm}
{\rm Assume that (H1)-(H4), \eqref{eq38} and \eqref{e6} hold. Then,
there exists a $\3_n$ such that $\lim_{n\to\8}\3_n=0$
\begin{equation*}
\lim_{n\to\8}\d_{\mathbb{L}}(\pi^{n,\3_n}(\cdot),\pi(\cdot))=0.
\end{equation*}
}
\end{thm}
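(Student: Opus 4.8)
The plan is to bound $\d_{\mathbb{L}}(\pi^{n,\3},\pi)$ by inserting, at a common grid time $t=k\3$, the transition kernels of both the EI scheme and the SPDE, and then to send $k\to\8$ before letting $n\to\8$ along a suitable sequence $\3_n$. Fix once and for all a point $x\in\mathcal{D}(A)$, so that the constant in Lemma \ref{numer}, which involves $\|Ax\|_H$, is finite, and set $x_n:=\pi_n x$. Since the stationary distributions $\pi$ (Lemma \ref{analytic}) and $\pi^{n,\3}$ (Theorem \ref{numerical}) are unique, they are independent of the initial datum, so this choice entails no loss of generality. Recall that $Y^{n,\3}_{x_n}(k\3)=\bar{Y}^{n,\3}_{x_n}(k\3)$ at the gridpoints, so that the law of $Y^{n,\3}_{x_n}(k\3)$ is precisely $\P^{n,\3}_k(x_n,\cdot)$, while the law of $X_x(k\3)$ is $\P_{k\3}(x,\cdot)$.

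First I would record the elementary coupling bound: because $X_x(k\3)$ and $Y^{n,\3}_{x_n}(k\3)$ are realised on the same probability space and driven by the same $W$, for every $f\in\mathbb{L}$ we have $|\E f(X_x(k\3))-\E f(Y^{n,\3}_{x_n}(k\3))|\le\E\|X_x(k\3)-Y^{n,\3}_{x_n}(k\3)\|_H\le(\E\|X_x(k\3)-Y^{n,\3}_{x_n}(k\3)\|_H^2)^{1/2}$. Combined with Lemma \ref{numer}, this yields the uniform-in-$k$ estimate
\[
\d_{\mathbb{L}}(\P_{k\3}(x,\cdot),\P^{n,\3}_k(x_n,\cdot))\le C^{1/2}\{\ll_n^{-(\theta_1\wedge1/2)}+\3^{\theta_1\wedge\theta_2}\}^{1/2},
\]
where $C$ is independent of $n$, $\3$ and $k$.

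Next I would invoke the triangle inequality
\[
\d_{\mathbb{L}}(\pi^{n,\3},\pi)\le\d_{\mathbb{L}}(\pi^{n,\3},\P^{n,\3}_k(x_n,\cdot))+\d_{\mathbb{L}}(\P^{n,\3}_k(x_n,\cdot),\P_{k\3}(x,\cdot))+\d_{\mathbb{L}}(\P_{k\3}(x,\cdot),\pi),
\]
and, holding $n$ and $\3$ fixed, let $k\to\8$. The first term vanishes by the definition of the stationary distribution $\pi^{n,\3}$ of the EI scheme; the third term vanishes because $k\3\to\8$ and $\pi$ is the stationary distribution of \eqref{eq1}; the middle term is already controlled uniformly in $k$. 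The inequality therefore collapses to $\d_{\mathbb{L}}(\pi^{n,\3},\pi)\le C^{1/2}\{\ll_n^{-(\theta_1\wedge1/2)}+\3^{\theta_1\wedge\theta_2}\}^{1/2}$. It remains to pick the stepsize: for each $n$ the existence of $\pi^{n,\3}$ requires, via Theorems \ref{boundedness} and \ref{stability}, that $\3<\min\{1,(2\aa+\gg)^2/(4\rr_1^2),(2\aa+\gg)^2/(4\rr_2^2)\}$, a threshold that shrinks with $n$ because $\rr_1,\rr_2$ grow with $\ll_n$. I would thus choose $\3_n>0$ with $\3_n\le 1/n$ and $\3_n$ below this threshold, which simultaneously guarantees admissibility and forces $\3_n\to0$. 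Since $\ll_n\to\8$ gives $\ll_n^{-(\theta_1\wedge1/2)}\to0$ and $\3_n\to0$ gives $\3_n^{\theta_1\wedge\theta_2}\to0$, the collapsed display with $\3=\3_n$ yields $\d_{\mathbb{L}}(\pi^{n,\3_n},\pi)\to0$ as $n\to\8$, which is the claim.

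The main obstacle is the order of the two limits. The error estimate of Lemma \ref{numer} only feeds into the stationary laws after $k$ has been sent to infinity, so one must first let $k\to\8$ for fixed $n,\3$ to reach the inequality $\d_{\mathbb{L}}(\pi^{n,\3},\pi)\le C^{1/2}\{\ll_n^{-(\theta_1\wedge1/2)}+\3^{\theta_1\wedge\theta_2}\}^{1/2}$, and only then let $n\to\8$ along $\3_n$. A secondary point to check carefully is that the admissible stepsize window narrows as $n$ grows, so one must confirm that a sequence $\3_n\to0$ can be threaded through these shrinking windows while keeping the error bound small; the explicit choice $\3_n\le 1/n$ taken below the threshold settles this.
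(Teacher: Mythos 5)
Your proposal is correct and follows essentially the same route as the paper: the same three-term triangle inequality through $\P^{n,\3}_k(x_n,\cdot)$ and $\P_{k\3}(x,\cdot)$, with the middle term controlled by the uniform-in-time estimate of Lemma \ref{numer} and the outer terms by the stationarity statements of Theorem \ref{numerical} and Lemma \ref{analytic}. Your version is in fact slightly tidier on two points the paper glosses over --- you send $k\to\8$ first to extract the quantitative bound $\d_{\mathbb{L}}(\pi^{n,\3},\pi)\le C^{1/2}\{\ll_n^{-(\theta_1\wedge1/2)}+\3^{\theta_1\wedge\theta_2}\}^{1/2}$, and you correctly take $x\in\mathcal{D}(A)$ so that the constant in Lemma \ref{numer}, which involves $\|Ax\|_H$, is finite --- but the underlying argument is the same.
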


\begin{proof}
Fix $x\in H$ and let $\ee>0$ be arbitrary.
 By Lemma \ref{numer}, there exist a sufficiently large
 $n\in\mathbb{N}$ and a $\bar{\3}_n$ sufficiently small such that
such that
\begin{equation*}
\begin{split}
\d_{\mathbb{L}}(\mathbb{P}_{k\bar{\3}_n}(x,\cdot),\mathbb{P}^{n,\bar{\3}_n}_{k}(x_{n},\cdot))\le\ee/3.
\end{split}
\end{equation*}
For the previous  $n\in\mathbb{N}$, by Theorem \ref{numerical},
there exist a sufficiently small $\tt{\3}_n$ and $T_1>0$ such that
\begin{equation*}
\d_{\mathbb{L}}(\mathbb{P}^{n,\tt{\3}_n}_k(x_n,\cdot),\pi^{n,\tt{\3}_n}(\cdot))\le
\ee/3
\end{equation*}
whenever $k\tt{\3}_n\ge T_1$. Furthermore, due to Lemma
\ref{analytic} there exists  $T_2>0$ such that
\begin{equation*}
\d_{\mathbb{L}}(\mathbb{P}_{t}(x,\cdot),\pi(\cdot))\le \ee,\ \ \
t\ge T_2.
\end{equation*}
Let $T:=T_1\vee T_2$, $\3_n=\bar{\3}_n\wedge\tt{\3}_n$ and
$k=[T/\3_n]+1$. Then the desired assertion follows from
 the
triangle inequality
\begin{equation*}
\begin{split}
\d_{\mathbb{L}}(\pi^{n,\3}(\cdot),\pi(\cdot))&\leq
\d_{\mathbb{L}}(\mathbb{P}_{k\3}(x,\cdot),\pi(\cdot))+d_{\mathbb{L}}(\mathbb{P}_{k\3}(x,\cdot),
\mathbb{P}^{n,\3}_k(x_n,\cdot))\\
&\quad+\d_{\mathbb{L}}(\mathbb{P}^{n,\3}_k(x_n,\cdot),\pi^{n,\3}(\cdot)).
\end{split}
\end{equation*}
\end{proof}

\begin{rem}
{\rm For the finite-dimensional case, finite-time convergence of
numerical scheme is enough to discuss the limit of stationary
distribution of numerical solution  \cite[Theorem 6.23, p.266]{my06}.
While for the infinite-dimensional case, we need the  {\it uniform
convergence} of EI scheme \eqref{eq7} to reveal  the limit behavior
of $\pi^{n,\3}$, which is quite different from the
finite-dimensional cases,  and therefore \eqref{e6} is imposed. On
the other hand, for the finite-time convergence of EM scheme
\eqref{eq6}, condition \eqref{e6}  can be deleted by checking the
argument of Lemma \ref{numer} and combining with the Gronwall
inequality. }
\end{rem}

\begin{rem}
{\rm By following the procedure of this paper, numerical
approximation of stationary distribution of SPDEs with jumps can
also be discussed, which will be reported in forthcoming paper. }
\end{rem}

\noindent{\bf{Acknowledge }} The authors wish to express their
sincere thanks to the anonymous referee for his/her careful comments
and valuable suggestions, which greatly improved the paper.

 \end{document}